\newlength{\tabwidth}
\newlength{\tabheight}
\newlength{\tabrule}
\newlength{\tabwidthx}
\newlength{\tabheightx}
\def\gentabbox#1#2#3#4{\vbox to \tabheight{\setlength{\tabrule}{#3}%
  \setlength{\tabwidthx}{#1\tabwidth}\addtolength{\tabwidthx}{\tabrule}%

\setlength{\tabheightx}{#2\tabheight}\addtolength{\tabheightx}{-\tabheight}%
  \hbox to #1\tabwidth{%
 \hspace{-0.5\tabrule}\rule{\tabrule}{#2\tabheight}\hspace{-\tabrule}%
    \vbox to #2\tabheight{\hsize=\tabwidthx%
      \vspace{-0.5\tabrule}\hrule width\tabwidthx height\tabrule%
      \vspace{-0.5\tabrule}\vfil%
      \hbox to \tabwidthx{\hss#4\hss}%
        \vfil\vspace{-0.5\tabrule}%
      \hrule width\tabwidthx height\tabrule\vspace{-0.5\tabrule}}%
 \hspace{-\tabrule}\rule{\tabrule}{#2\tabheight}\hspace{-0.5\tabrule}}%
  \vspace{-\tabheightx}}}
\def\genblankbox#1#2{\vbox to \tabheight{\vfil\hbox to
#1\tabwidth{\hfil}}}
\def\tabbox#1#2#3{\gentabbox{#1}{#2}{0.4pt}{\strut #3}}
\newenvironment{tableau}{\bgroup\catcode`\:=13 \catcode`\.=13
  \catcode`\;=13 \catcode`\>=13 \catcode`\^=13
  \setlength{\tabheight}{3ex}\setlength{\tabwidth}{3ex}%
  \def\b##1##2##3{\gentabbox{##1}{##2}{1.2pt}{\vbox{##3}}}%
  \def\n##1##2##3{\gentabbox{##1}{##2}{0.4pt}{\vbox{##3}}}%
  \vbox\bgroup\offinterlineskip}{\egroup\egroup}
\newtheorem{theorem}{Theorem}[section]
\newtheorem{lemma}[theorem]{Lemma}
\newtheorem{proposition}[theorem]{Proposition}
\newtheorem*{theorem*}{Theorem}
\newtheorem*{corollary*}{Corollary}
\theoremstyle{definition}
\newtheorem{definition}[theorem]{Definition}
\theoremstyle{remark}
\newtheorem{example}[theorem]{Example}
\begin{document}
\title{Sign under the domino Robinson-Schensted maps}
\author{Thomas Pietraho}
\date{}
\thanks{We would like to thank Skidmore College for its hospitality during the writing of this manuscript.}
\email{tpietrah@bowdoin.edu} \subjclass[2000]{05E10}
\keywords{Robinson-Schensted, domino tableaux}
\address{Department of Mathematics\\Bowdoin College\\Brunswick,
Maine 04011}
 \maketitle

\begin{abstract} We generalize a formula obtained independently by A.~Reifegerste
and J.~Sj\"ostrand for the sign of a permutation under the classical Robinson-Schensted map to a family of domino Robinson-Schensted algorithms.
\end{abstract}
\section{Introduction}

In their work verifying R.~Stanley's sign imbalance formula, A.~Reifegerste
and J.~Sj\"ostrand independently obtained the following remarkable formula for reading the sign of a permutation from its image under the classical Robinson-Schensted map.  It is based on two tableaux statistics $e$ and $sign$:
\begin{theorem*}[\cite{reifegerste},\cite{sjostrand}]
Consider $w \in S_n$ and let $RS(w) = (P,Q)$ be its image under the classical Robinson-Schensted map.  Then
$$ sign(w) = (-1)^{e} \cdot sign(P) \cdot sign(Q).$$
\end{theorem*}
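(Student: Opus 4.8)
The plan is to prove the equivalent statement that the product $sign(w)\cdot sign(P)\cdot sign(Q)$ depends only on the common shape $\lambda$ of $P$ and $Q$; once this is established, evaluating the product at one convenient permutation per shape identifies the constant as $(-1)^{e}$. Since $RS$ is a bijection between $S_n$ and pairs of standard Young tableaux of equal shape, it suffices to connect any two such pairs by elementary moves under which the triple product is invariant.

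First I would record the easy half of the sign bookkeeping. Write $sign(T)=sign(\rho_T)$, where $\rho_T$ lists the entries of $T$ in a fixed reading order. If $T'$ is obtained from $T$ by exchanging the cells occupied by two of its entries, in a way that keeps $T'$ standard, then $\rho_{T'}=\tau\circ\rho_T$ for the transposition $\tau$ swapping those two values, so that $sign(T')=-sign(T)$. Thus a single transposition of entries always reverses $sign$, independently of the chosen reading order.

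Next I would analyze one elementary Knuth transformation $w\rightsquigarrow w'$. Such a move swaps two adjacent letters of the word, so $sign(w')=-sign(w)$ outright; by Knuth's theorem it preserves the insertion tableau, $P(w')=P(w)$, so $sign(P)$ is unchanged; and the key structural input is that it alters the recording tableau $Q$ by transposing exactly two of its entries. Feeding this into the previous paragraph gives $sign(Q(w'))=-sign(Q(w))$, and hence the triple product is preserved. Because elementary Knuth moves generate Knuth equivalence, and the recording tableaux of a fixed Knuth class range over all standard tableaux of shape $\lambda$, this shows the product is constant as $Q$ varies with $P$ held fixed. Applying the same argument to $w^{-1}$ and invoking the symmetry $RS(w^{-1})=(Q,P)$ together with $sign(w^{-1})=sign(w)$ handles variation of $P$ with $Q$ fixed, so the product is constant on each shape.

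Finally I would pin down the constant by computing $sign(w)\cdot sign(P)\cdot sign(Q)$ for one representative of each shape, for instance the inverse image under $RS$ of a pair of superstandard tableaux, and check that it equals $(-1)^{e}$. The main obstacle is the structural claim in the third step: verifying that an elementary Knuth transformation changes the recording tableau by precisely a single transposition of two entries requires a careful examination of the bumping paths at the three affected insertion steps. Everything else is either an immediate sign computation or an appeal to standard properties of the correspondence.
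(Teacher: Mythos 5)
The paper never proves this statement: it is quoted from \cite{reifegerste} and \cite{sjostrand} as the known type-$A$ result that the rest of the paper generalizes, so there is no proof in the text to measure yours against directly. The meaningful comparison is with the paper's proof of its own main theorem, and there your outline is a genuine and genuinely parallel alternative. The paper anchors its rank-$r$ formula on a restricted class of elements (involutions, handled via the $G_\infty$ formula of \cite{mps} and the cycle relation of \cite{pietraho:relation}) and then propagates it to all of $H_n$ by checking that both sides of the formula transform identically under Ta\c{s}k{\i}n's plactic generators \cite{taskin:plactic}, which generate the classes of signed permutations sharing a left tableau. You propose exactly the type-$A$ instance of this scheme: propagate the triple product $sign(w)\cdot sign(P)\cdot sign(Q)$ under Knuth moves (which generate the fibers of $P$), use the symmetry $RS(w^{-1})=(Q,P)$ and $sign(w^{-1})=sign(w)$ to vary $P$ as well, and then anchor on one representative per shape. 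What your route buys is self-containedness; what it costs is that its two nontrivial inputs must actually be proved rather than invoked.

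Those two inputs are where the content sits. First, the claim that an elementary Knuth move alters $Q$ by a single transposition of entries is true, but it is a theorem, not an observation: it is the standard correspondence between Knuth moves on words and elementary dual Knuth (dual equivalence) moves on recording tableaux. You correctly isolate it as the crux; one clean way to discharge it is to note that a Knuth move multiplies $w$ on the right by an adjacent transposition, hence swaps two consecutive values $j, j+1$ in place in the one-line notation of $w^{-1}$, and then to show that such a swap, under the dual Knuth hypothesis, exchanges exactly the entries $j$ and $j+1$ of the insertion tableau of $w^{-1}$; the symmetry theorem converts this into the statement you need. Second, your anchoring step is breezier than it deserves: for an arbitrary shape the permutation mapping to a pair of superstandard tableaux is not something one "just checks." The fix is to anchor at involutions, exactly as the paper does in type $B$: if $w$ is an involution then $P=Q$, so $sign(P)\cdot sign(Q)=1$ and you need only $sign(w)=(-1)^e$. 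Writing the number of $2$-cycles of $w$ as $(n-f)/2$ with $f$ the number of fixed points, Sch\"utzenberger's theorem gives that $f$ equals the number of odd columns of $\lambda$, and a column-by-column count shows $(n-f)/2=\sum_c \lfloor h_c/2\rfloor$ where $h_c$ runs over the column heights of $\lambda$; since a column of height $h$ contributes $\lfloor h/2 \rfloor$ cells to even-index rows, this is precisely $e(\lambda)$. With these two pieces supplied, your argument is complete and stands as a legitimate alternative to the cited proofs.
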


In \cite{garfinkle1}, D.~Garfinkle, building on the work of D.~Barbasch and D.~Vogan in \cite{barbasch:vogan}, introduced a generalization of the Robinson-Schensted algorithm relating elements of the other classical Weyl groups and same-shape pairs of domino tableaux.  This algorithm was further extended to a one-parameter family of maps $G_r$ by M.~A.~A.~van Leeuwen using domino tableaux with non-empty core.  For large values of $r$, van Leeuwen's maps recover yet another generalization of the Robinson-Schensted maps in this setting introduced by R.~Stanley \cite[\S 6]{stanley:some}.  The aim of this paper is to address the natural question of whether it is again possible to recover the parity of the length function, which we call its sign, from the tableaux images of these maps.

In fact, this has already been done for the Stanley map in \cite{mps} in the more general context of complex reflection groups.  For the family of maps $G_r$, our main result relies on three domino tableaux statistics $d$, $spin$, and $sign$ defined in Section \ref{subsection:stats}.  Let $H_n$ be the Weyl group of type $B_n$.

\begin{theorem*}
Consider $w \in H_n,$ and let $G_r(w) = (P,Q)$ be its image among same-shape standard domino tableaux of rank $r$.  Then
$$ sign(w) = (-1)^{d} \cdot (-1)^{spin(P)+spin(Q)} \cdot sign(P) \cdot sign(Q)$$
\end{theorem*}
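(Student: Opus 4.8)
The plan is to prove the identity by induction on $n$, exploiting the recursive structure of van Leeuwen's insertion map. By the definition recalled in the introduction, $sign(w) = (-1)^{\ell(w)}$, where $\ell$ is the length function on the Coxeter group $H_n$; thus the left-hand side is multiplicative and admits an explicit combinatorial description in terms of the inversions and the negative entries of the signed permutation $w$. The map $G_r$ is computed by inserting the signed letters of $w$ one at a time, each insertion appending a single new domino (labeled by the current step) to the insertion tableau $P$ while recording its position in $Q$. Consequently the global identity will follow once I control how each of the four factors on the right changes under a single such insertion, and how this matches the change in $sign(w)$ incurred by adjoining one letter.

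First I would fix notation for the incremental comparison. Writing $w$ as a shorter signed permutation $w'$ followed by one additional letter, I would compare $G_r(w) = (P,Q)$ with the pair $(P',Q')$ produced from $w'$. The recording tableau satisfies $Q = Q' \cup \{\text{new domino}\}$, so the changes in $sign(Q)$ and $spin(Q)$ are governed only by the shape change and the top label, and are straightforward. The substantive part is the insertion tableau: the new domino enters $P$ at the end of a \emph{bumping path}, and the changes in both $sign(P)$ and $spin(P)$ are determined by the cells this path traverses and by the orientations (horizontal versus vertical) of the dominoes it displaces. The core local statement to establish is that the parity of this bumping path, together with the orientation data recorded by $spin$, exactly accounts for the discrepancy between the change in $sign(w)$ and the change in $sign(P)\,sign(Q)$, with the residual absorbed by the shape-dependent statistic $d$. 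Assuming, as for the classical and Garfinkle maps, that $G_r(w^{-1}) = (Q,P)$, the analysis of the two tableaux is interchangeable, so it suffices to carry out the bumping-path computation once.

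The main obstacle is precisely this bumping-path parity computation: unlike the classical case, a domino bump can be horizontal or vertical and can shift the path by one cell or by two, so relating the sign of the reading word of $P$ before and after insertion to the length change in $H_n$ requires a careful case analysis keyed to the orientations of the inserted and displaced dominoes. I expect $spin$ to enter exactly here, converting the orientation bookkeeping into the clean factor $(-1)^{spin(P)+spin(Q)}$. The base case $n \le 1$ of the induction is immediate. To pin down the shape-dependent correction $d$, I would invoke the result already available for the Stanley map: since $G_r$ specializes to Stanley's map for large $r$ and the identity holds there by \cite{mps}, the large-$r$ specialization both calibrates $d$ and furnishes an independent check, after which an $r$-independence argument—tracking how $P$, $Q$, and each statistic transform as the rank is varied—transports the identity to all values of $r$.
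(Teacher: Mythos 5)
Your proposal is a plan rather than a proof: the two places where you defer the work are exactly where the substance lies, and neither deferral can be discharged the way you expect. First, the inductive step. You correctly identify that everything hinges on relating the change in $sign(P)$ under a single domino insertion to the length change in $H_n$, but you leave this ``bumping-path parity computation'' as an acknowledged obstacle. This is not a routine case analysis: a domino bump can displace horizontal and vertical dominoes, break them across the bumping path, and interact with the nonempty core, and no clean local statement of the kind you posit is available. Note also that the inverse symmetry $G_r(w^{-1})=(Q,P)$ does not buy you the reduction you claim: in a single insertion step, $Q$ changes by adjoining one domino at a known cell while $P$ changes globally along the bumping path, so the two analyses are not interchangeable within the induction. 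The paper avoids the insertion procedure entirely. It proves the formula for involutions (where $P=Q$) and then transports it, at each fixed rank $r$, across plactic classes using Ta\c{s}k{\i}n's generators $D_1^r,\ldots,D_5^r$, whose effect on right tableaux is already described in \cite{taskin:plactic} and \cite{pietraho:knuth}; checking how the quantity $F(w) = (-1)^{d}\cdot(-1)^{spin(P)+spin(Q)}\cdot sign(P)\cdot sign(Q)$ changes under each generator, and matching it against the change in $sign(w)$, is a finite and tractable case analysis.

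Second, your fallback calibration has the same flaw. You propose to calibrate $d$ at large $r$ via \cite{mps} and then run an ``$r$-independence argument---tracking how $P$, $Q$, and each statistic transform as the rank is varied.'' But the only known rank-change relation, $t_{r,r+1}(T) = MT(T,\Delta(T))$ from \cite{pietraho:relation}, is a statement about same-tableau pairs $(T,T)$, i.e.\ about involutions; it does not describe how $G_{r+1}(w)$ is obtained from $G_r(w)$ for a general signed permutation $w$. This is precisely why the paper's rank-change step (cycles in $\Delta(P)$ preserve type $X$/$W$, hence preserve $d$, while $spin(P)+spin(Q)=tc(w)$ is rank-independent) is confined to involutions, and why the extension to all of $H_n$ must then go through the plactic relations at each fixed rank rather than through varying $r$. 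As written, your argument needs either the unproven local insertion lemma or an unproven rank-change relation for arbitrary $w$; both gaps are genuine, and the second cannot be filled with the tools currently in the literature.
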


The proof involves three steps.  We first verify the equation for large $r$ by translating between Stanley's map and $G_r$ and appealing to the formula established in \cite{mps}.  It is then possible to extend the result to involutions in $H_n$ for arbitrary $r$ using a relation between consecutive maps $G_r$ described in \cite{pietraho:relation}, and finally to to all signed permutations by tracking the behavior of the established sign formula under M.~Ta\c{s}k{\i}n's plactic relations introduced in \cite{taskin:plactic}.

The domino tableaux Robinson-Schensted algorithms appear in the work classifying Kazhdan-Lusztig cells in unequal parameter Iwahori-Hecke algebras of type $B$, see \cite{bgil}.  At least conjecturally, for certain values of the parameter, one-sided cells correspond to plactic and coplactic classes for the maps $G_r$.  As corollary to the above sign formula, we note that the M\"obius function for the Bruhat order, ubiquitous in Kazhdan-Lusztig theory,  is well-behaved with respect to these cells.  First described by  by D.-N.~Verma in \cite{verma:mobius}, the M\"obius function $\mu$ takes the form $$\mu(v,w) = (-1)^{\ell(v)+\ell(w)},$$ where $\ell$ is the length function on the Weyl group.  In type $B$, the values of $\mu$ can be readily read off from the tableaux of $v$ and $w$ arising from the maps $G_r$.  Further,
\begin{corollary*} Consider $x,x',y,y' \in H_n$ and fix a map $G_r$.  Suppose that the left tableaux of the pair $x$ and $y$ as well as $x'$ and $y'$ are the same, and  the right tableaux of the pairs $x,x'$ and $y,y'$ similarly agree. Then
$$\mu(x,y) = \mu(x',y').$$
\end{corollary*}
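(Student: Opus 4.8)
The plan is to reduce the statement to the main sign formula of the preceding theorem. By Verma's result, for Bruhat-comparable elements one has $\mu(v,w) = (-1)^{\ell(v)+\ell(w)} = sign(v)\cdot sign(w)$, since $sign$ is by definition the parity $(-1)^{\ell}$ of the length function. Thus it suffices to establish the single identity
$$ sign(x)\cdot sign(y) = sign(x')\cdot sign(y'), $$
after which the corollary follows at once. The whole problem therefore collapses to a parity bookkeeping in the three tableaux statistics.

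First I would unwind the hypotheses to pin down the shapes. Writing $G_r(w) = (P_w,Q_w)$ for each of the four elements, the assumed equalities $P_x = P_y$, $P_{x'} = P_{y'}$, $Q_x = Q_{x'}$, and $Q_y = Q_{y'}$ propagate through the same-shape condition on each pair to force a single common shape $\lambda$ for all four images. This is the observation that makes everything work: the statistic $d$ depends only on this common shape, so $(-1)^d$ takes the same value across $x,y,x',y'$ and the two copies appearing in each product cancel.

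Next I would substitute the sign formula into each factor. Since $P_x = P_y$, the left-tableau contributions to $sign(x)\cdot sign(y)$, namely $(-1)^{spin(P_x)}sign(P_x)$ and $(-1)^{spin(P_y)}sign(P_y)$, are identical and hence square to $1$; the same holds for the pair with $P_{x'} = P_{y'}$. After these cancellations, together with the cancellation of $(-1)^d$, what survives on each side is only the right-tableau data:
$$ sign(x)\cdot sign(y) = (-1)^{spin(Q_x)+spin(Q_y)}\, sign(Q_x)\, sign(Q_y), $$
and the analogous expression with primes. Because $Q_x = Q_{x'}$ and $Q_y = Q_{y'}$, these two expressions agree term by term, which yields the displayed identity and hence $\mu(x,y) = \mu(x',y')$.

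The genuinely mathematical content is entirely the cancellation above, which is routine once the common shape $\lambda$ has been identified. The one point that requires care is the passage through Verma's formula, which produces $\mu$ directly only for comparable pairs: if one of $(x,y)$, $(x',y')$ is comparable and the other is not, the naive parity argument would compare a $\pm 1$ against a $0$. I would therefore either restrict attention to comparable pairs, or argue that Bruhat comparability is itself controlled by the partial tableaux data so that the two Möbius values vanish simultaneously in the incomparable case; in the cell-theoretic setting in which the corollary is applied this presents no obstacle.
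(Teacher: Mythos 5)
Your proof is correct and takes essentially the same route the paper intends: the corollary is stated as an immediate consequence of the main theorem, obtained by writing $\mu(x,y) = (-1)^{\ell(x)+\ell(y)} = sign(x)\cdot sign(y)$ via Verma's formula and then cancelling the shared left-tableau factors and the shape-dependent $(-1)^d$ exactly as you do, leaving only the right-tableau data which agrees between the two pairs by hypothesis. Your closing caveat about incomparable pairs is a genuine subtlety that the paper glosses over (it treats $\mu(v,w)=(-1)^{\ell(v)+\ell(w)}$ as an unconditional formula), and your first proposed resolution --- reading the corollary under that convention, or restricting to comparable pairs --- is the right one; flagging it only strengthens the argument.
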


\section{Preliminaries}

We define the notions of standard and domino tableaux, describe a family of Robinson-Schensted maps, and detail several tableaux statistics which will be necessary for our work.

\subsection{Partitions and tableaux}

Our first objective is to define the notions of standard Young, bi-, and domino tableaux.
A non-increasing sequence of positive integers $\lambda = (\lambda_1, \lambda_2, \ldots, \lambda_t)$ is called a {\it partition} of the integer $n = \sum_i \lambda_i$.  We will write $\lambda \vdash n$ and $|\lambda|=n$.  Partition notation can often be abbreviated by using exponents to denote multiplicity; for instance, $(4,4,3,3,3,1)$ can be written as $(4^2, 3^3,1)$.  We will identify a partition with its Young diagram $[\lambda]$, or a left-justified array of squares containing $\lambda_i$ squares in row $i$.

\begin{figure}[h!]
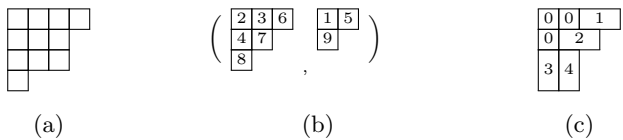

        \begin{subfigure}{0.2\textwidth}
                \centering
\begin{tiny}
$$
\begin{tableau}
:.{}.{}.{}.{}\\
:.{}.{}.{}\\
:.{}.{}.{}\\
.{}
\end{tableau}
$$
\end{tiny}
\vspace{-.2in}
\caption*{(a)}
\end{subfigure}
\hspace{.2in}
\begin{subfigure}{0.2\textwidth}
                \centering
\begin{tiny}
$$
\raisebox{.25in}{\Bigg(}
\hspace{.05in}
\begin{tableau}
:.{2}.{3}.{6}\\
:.{4}.{7}\\
:.{8}\\
:;\\
\end{tableau}
\hspace{.05in}
\raisebox{.1in}{,}
\hspace{.05in}
\begin{tableau}
:.{1}.{5}\\
:.{9}\\
:;\\
:;\\
\end{tableau}
\hspace{.05in}
\raisebox{.25in}{\Bigg)}
$$
\end{tiny}
\vspace{-.2in}
\caption*{    \hspace{.2in} (b)}
\end{subfigure}
\hspace{.4in}
\begin{subfigure}{0.2\textwidth}
                \centering
                \begin{tiny}
$$
\begin{tableau}
:.{0}.{0}>{1}\\
:.{0}>{2}\\
:^{3}^{4}\\
:;\\
\end{tableau}
$$
\end{tiny}
\vspace{-.2in}
\caption*{(c)}
\end{subfigure}

\caption{(a) The Young diagram of the partition $(4,3^2,1)$, (b) a standard bitableau of shape $((3,2,1),(2,1))$, and (c) a standard domino tableau of rank $2$.}
\end{figure}

If the rightmost square $s$ of a row in a Young diagram $[\lambda]$  can be removed while leaving another Young diagram $[\lambda]_s$, it will be called a {\it corner of} $[\lambda]$.  Beginning with $[\lambda]$, one can start successively removing corners until this process inevitably terminates after $n$ steps with the empty partition.  If the square removed at the $i$th step of this procedure is labeled with the number $n-i+1$, the result is  called a {\it standard Young tableau}.  For a tableau $T$, we will write $sh(T)$ for the underlying partition, $|T|$ for $|sh(T)|$, $SYT(\lambda)$ for the set of all standard Young tableaux of shape $\lambda$, and $SYT(n)$ for the set of standard Young tableaux with $n$ boxes.

We will call an ordered pair of partitions $(\lambda, \mu)$ a {\it bipartition} of $n$ if $|\lambda|+|\mu|=n$. A square of $([\lambda],[\mu])$ is a corner if it is a corner of either $[\lambda]$ or $[\mu]$.  Successive removal of corners starting with $([\lambda],[\mu])$  terminates after $n$ steps, and if the square removed at the $i$th step is labeled with $n-i+1$, the result will be called a {\it standard bitableau}.  The shape of a bitableau is the pair of its underlying partitions; we will write $SBT(\lambda, \mu)$ for the set of all standard bitableaux of shape $(\lambda, \mu)$, and $SBT(n)$ for the set of standard bitableaux with $n$ boxes.

Two squares of a Young diagram are {\it adjacent} if they share a common side.  Adjacent squares $s, t$ in $[\lambda]$ form a {\it domino corner} if $s$ is a corner for $[\lambda]$ and $t$ is a corner for $[\lambda]_s$.
Beginning with $[\lambda]$, one can start successively removing domino corners and continue until this is no longer possible, say after $n$ steps.  The resulting shape is a staircase partition $\delta_r= (r, r-1, r-2, \ldots 1)$ for some $r \geq 0$ and is independent of the order of removal of domino corners, see \cite{macdonald:symmetric}.  The partition $\delta_r$ is known as the $2$-{\it core} of $\lambda$.  If the squares of $[\lambda]$ corresponding to the $2$-core are labeled with $0$ and the domino removed at the $i$th step is labeled with $n-i+1$, the result is a {\it standard domino tableau} of rank $r$.  The set of all standard domino tableaux of shape $\lambda$ with $2$-core $\delta_r$ will be denoted by $SDT_r(\lambda)$ while $SDT_r(n)$ will denote the set of all standard domino tableaux consisting of the $2$-core $\delta_r$ and $n$ dominos. We will call the set of squares in a domino tableau $T$ labeled with 0 the {\it core} of $T$.

\subsection{Robinson-Schensted maps}

Consider a permutation $w \in S_n$.  We will write it in one-line notation as $w_1 w_2 \ldots w_n$ with each entry $w_i \in \mathbb{N}_n$.  The classical Robinson-Schensted map establishes a bijection between permutations in $S_n$ and same-shape pairs of standard Young tableaux in $SYT(n) \times SYT(n)$ via an insertion and a recording algorithm.  We assume the reader is familiar with the basics; details can be found in \cite{fulton:tableaux} or \cite{stanley:enumerative}.  We will write $RS(w) = (P(w), Q(w))$ for the image of a permutation under this map.

A {\it signed permutation}  is a permutation together with a choice of sign for each of its entries.  We will again use one-line notation, using a bar over a letter to denote the choice of a negative sign.  The set of signed permutations on $n$ letters forms a group under composition and multiplication of signs;  it is isomorphic to the hyperoctahedral group $H_n = \mathbb{Z}_2 \wr S_n$ and is generated by
    $$s_i  = 1 \; 2 \, \ldots  i+1 \hspace{.1in} i \, \ldots  n \text{, \hspace{.2in} and \hspace{.2in}}
    t  = \overline{1} \;  2\, \ldots  n $$
for $1 \leq i < n$.  Let $\ell$ be the length function on $H_n$ defined in terms of this generating set and write $sign(w) = (-1)^{\ell(w)}.$

We are interested in two generalizations of the Robinson-Schensted map in this setting.  The first establishes a map
$$G_\infty:  H_n \longrightarrow SBT(n) \times SBT(n)$$
that is a bijection onto same-shape pairs of bitableaux.  Given a signed permutation, the insertion bitableau for $w \in H_n$ is constructed by a variant of the classical insertion algorithm.  Positive letters are inserted into the first tableau and negative into the second following their order of appearance in the one line notation for $w$.  The recording bitableau tracks the shape of the insertion bitableau at each step.  See \cite{stanley:some}.

\begin{example}
Consider the signed permutation $w= (\overline{4} \, \overline{3} 2 1) \in H_4.$  The sequence of bitableaux constructed by successive insertion of the letters of $w$ is:

\begin{tiny}
$$
\raisebox{.10in}{$( \varnothing , \varnothing) \rightarrow (\varnothing, $}
\hspace{.05in}
\raisebox{.10in}{\framebox[1.8\width]{4}}
\hspace{.02in}
\raisebox{.10in}{$)
\rightarrow$}
\raisebox{.1in}{$
\Big{(}\varnothing ,$}
\hspace{.05in}
\begin{tableau}
:.{3}\\
:.{4}\\
\end{tableau}
\hspace{.05in}
\raisebox{.10in}{$\Big{)}
\rightarrow
\Big{(}$}
\hspace{.05in}
\begin{tableau}
:.{2}\\
:;\\
\end{tableau}
\hspace{.05in}
\raisebox{.10in}{,}
\hspace{.05in}
\begin{tableau}
:.{3}\\
:.{4}\\
\end{tableau}
\hspace{.05in}
\raisebox{.10in}{$\Big{)}
\rightarrow
\Big{(}$}
\hspace{.05in}
\begin{tableau}
:.{1}\\
:.{2}\\
\end{tableau}
\hspace{.05in}
\raisebox{.10in}{,}
\hspace{.05in}
\begin{tableau}
:.{3}\\
:.{4}\\
\end{tableau}
\hspace{.05in}
\raisebox{.10in}{\Big{)}}
$$
\end{tiny}
\hspace{-.05in}
Keeping track of the shapes appearing in this sequence, we can construct another bitableau of the same shape obtaining:
$$\raisebox{.08in}{$G_\infty (w) = $
\Big{(}
\Big{(}
\hspace{.01in}}
\begin{tiny}
\begin{tableau}
:.{1}\\
:.{2}\\
\end{tableau}
\hspace{.05in}
,
\hspace{.05in}
\begin{tableau}
:.{3}\\
:.{4}\\
\end{tableau}
\hspace{.05in}
\end{tiny}
\raisebox{.08in}{\Big{)}}
\hspace{.05in},
\hspace{.05in}
\raisebox{.08in}{\Big{(}}
\hspace{.05in}
\begin{tiny}
\begin{tableau}
:.{3}\\
:.{4}\\
\end{tableau}
\hspace{.05in}
,
\hspace{.05in}
\begin{tableau}
:.{1}\\
:.{2}\\
\end{tableau}
\end{tiny}
\hspace{.05in}
\raisebox{.08in}{\Big{)} \Big{)}}.
$$
\end{example}

The second generalization of the classical Robinson-Schensted algorithm to the hyperoctahedral groups map has image within same-shape pairs of domino tableaux.  In fact, for every non-negative integer $r$, there is a map
$$G_r:  H_n \longrightarrow SDT_r(n) \times SDT_r(n)$$
that is a bijection onto same-shape pair of domino tableaux of rank $r$. Starting with the diagram $[\delta_r]$, a tableau is constructed via a domino insertion procedure inspired by the classical algorithm.  Positive letters are inserted as horizontal dominos in the first row of the tableau while negative ones are inserted as vertical dominos in its first column.   As long as the two types of dominos do not interact, the procedure is very similar to classical insertion; when they do, a more complicated bumping procedure becomes necessary.

\begin{example}  Let $r=2$ and $w= (\overline{4} \, \overline{3} 2 1) \in H_4.$  The sequence of domino tableaux constructed by successive insertion of the letters of $w$ into the 2-core $[\delta_2]$ is:
$$
\begin{tiny}
\begin{tableau}
:.{0}.{0}\\
:.{0}\\
:;\\
:;\\
\end{tableau}
\hspace{.1in}
\raisebox{.2in}{$\longrightarrow$}
\hspace{.1in}
\begin{tableau}
:.{0}.{0}\\
:.{0}\\
:^4\\
:;\\
\end{tableau}
\hspace{.1in}
\raisebox{.2in}{$\longrightarrow$}
\hspace{.1in}
\begin{tableau}
:.{0}.{0}\\
:.{0}^4\\
:^3\\
:;\\
\end{tableau}
\hspace{.1in}
\raisebox{.2in}{$\longrightarrow$}
\hspace{.1in}
\begin{tableau}
:.{0}.{0}>{2}\\
:.{0}^4\\
:^3\\
:;\\
\end{tableau}
\hspace{.1in}
\raisebox{.2in}{$\longrightarrow$}
\hspace{.1in}
\begin{tableau}
:.{0}.{0}>{1}\\
:.{0}>2\\
:^3>4\\
:;\\
\end{tableau}
\end{tiny}
$$
Keeping track of the shapes appearing in this sequence, we can construct another domino tableau of the same shape obtaining:
$$\raisebox{.18in}{$G_2 (w) = $
\hspace{.1in}
\Bigg{(}}
\hspace{.05in}
\begin{tiny}
\begin{tableau}
:.{0}.{0}>{1}\\
:.{0}>2\\
:^3>4\\
:;\\
\end{tableau}
\hspace{.1in}
\raisebox{.05in}{,}
\hspace{.1in}
\begin{tableau}
:.{0}.{0}>{3}\\
:.{0}^{2}^{4}\\
:^{1}\\
:;\\
\end{tableau}
\end{tiny}
\raisebox{.18in}{
\Bigg{)}}
$$
\end{example}

An initial version of the hyperoctahedral Robinson-Schensted maps $G_0$ and $G_1$ is due to D.~Barbasch and D.~Vogan \cite{barbasch:vogan}, but was only described later in terms of domino insertion by D.~Garfinkle \cite{garfinkle1}.  Van Leeuwen showed that the bijection holds for all $r$ and described the map using growth diagrams \cite{vanleeuwen:RS}.  For a more formal description of the $G_r$ where all the details of the insertion and bumping procedures may be found, see \cite{garfinkle1} or \cite{sw}.

When $r$ is sufficiently large relative to $n$, inserted dominos corresponding to the positive and negative letters of $w$ do not interact and it is easy to see that it is possible to recover $G_\infty(w)$ from $G_r(w)$.  Thus in this sense, $G_\infty$ is an asymptotic version of the $G_r$.

\begin{figure}
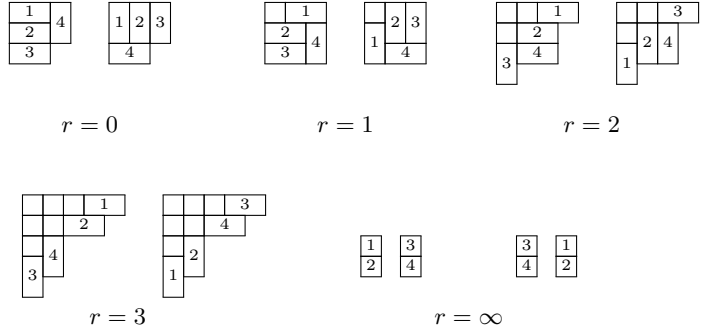


        \begin{subfigure}[b]{0.3\textwidth}
                \centering
                $$
                \begin{tiny}
\begin{tableau}
:>{1}^{4}\\
:>2\\
:>3\\
:;\\
\end{tableau}
\hspace{.1in}
\hspace{.1in}
\begin{tableau}
:^{1}^{2}^{3}\\
:;\\
:>{4}\\
:;\\
\end{tableau}
\end{tiny}
$$
                \caption*{$r=0$}
        \end{subfigure}
                \hspace{.0in}
        \begin{subfigure}[b]{0.2\textwidth}
                \centering
$$
 \begin{tiny}
\begin{tableau}
:.{}>{1}\\
:>2^{4}\\
:>3\\
:;\\
\end{tableau}
\hspace{.1in}
\hspace{.1in}
\begin{tableau}
:.{}^{2}^{3}\\
:^{1}\\
:;>{4}\\
:;\\
\end{tableau}
\end{tiny}
$$
                \caption*{$r=1$}
                \label{fig:gull}
        \end{subfigure}
\hspace{.2in}
        \begin{subfigure}[b]{0.2\textwidth}
                \centering
$$
 \begin{tiny}
\begin{tableau}
:.{}.{}>{1}\\
:.{}>2\\
:^3>4\\
:;\\
\end{tableau}
\hspace{.1in}
\hspace{.1in}
\begin{tableau}
:.{}.{}>{3}\\
:.{}^{2}^{4}\\
:^{1}\\
:;\\
\end{tableau}
\end{tiny}
$$
                \caption*{$r=2$}
                \label{fig:gull}
        \end{subfigure}

\vspace{.2in}

        \begin{subfigure}[b]{0.2\textwidth}
                \centering
$$
 \begin{tiny}
\begin{tableau}
:.{}.{}.{}>{1}\\
:.{}.{}>2\\
:.{}^4\\
:^3\\
\end{tableau}
\hspace{.1in}
\hspace{.1in}
\begin{tableau}
:.{}.{}.{}>{3}\\
:.{}.{}>4\\
:.{}^2\\
:^1\\
\end{tableau}
\end{tiny}
$$
                \caption*{$r=3$}
                \label{fig:gull}
        \end{subfigure}
\hspace{.5in}
                \begin{subfigure}[b]{0.3\textwidth}
                \centering
$$
 \begin{tiny}
\begin{tableau}
:.1\\
:.2\\
\end{tableau}
\hspace{.1in}
\begin{tableau}
:.3\\
:.4\\
\end{tableau}
\hspace{.5in}
\begin{tableau}
:.3\\
:.4\\
\end{tableau}
\hspace{.1in}
\begin{tableau}
:.1\\
:.2\\
\end{tableau}
\end{tiny}
$$
                \caption*{$r=\infty$}
                \label{fig:gull}
        \end{subfigure}

\caption{Images of $w= (\overline{4} \, \overline{3} 2 1)$ under the domino Robinson-Schensted maps $G_r$}
\end{figure}

\subsection{Tableaux statistics}\label{subsection:stats}
Our ultimate goal is to be able to read off the sign character of a signed permutation from its image under the various Robinson-Schensted maps.  To do so, we first need to define and extend a few tableaux statistics.

\begin{definition}
Let $T \in SYT(n)$.  A pair of entries $(i,j)$ is an {\it inversion} in $T$ if $j<i$ and $j$ is contained in a row strictly below the row of $i$.
\end{definition}

We first extend the definitions of inversions and sign to bitableaux as well as domino tableaux.

\begin{definition}
Let $T = (T_1,T_2) \in SBT(n)$.   A pair of entries $(i,j)$ is an {\it inversion} in $T$ if it is either an inversion in the standard Young tableaux $T_1$ or $T_2$, or $j<i$ and $j$ is contained in $T_1$ while $i$ is contained in $T_2$.  We define $spin(T) = |T_2|/2$.
\end{definition}

\begin{definition} Let $T\in SDT(n)$.  We will say that a square of $T$ is {\it marked} if the sum of its coordinates is even. A pair of positive entries $(i,j)$ is an {\it inversion} in $T$ if $j<i$ and $j$ labels a marked square in a row strictly below the marked square with label $i$.
\end{definition}

For a standard Young, bi-, or domino tableau $T$, we will write $Inv(T)$ for the set of its inversions and $inv(T)$ for the the cardinality of this set.  The {\it sign} of the tableau $T$ will then be defined as $sign(T)= (-1)^{inv(T)}$.
Note that  when applied to domino tableaux, the present notion of sign differs in general from the traditional one as defined in \cite{white}, for example.  In particular, using the present definition, the sign of a domino tableau depends on more than just the underlying domino tiling of the domino tableau shape, see \cite[Prop. 9]{white}.

\begin{example} Consider the following three tableaux:

$$
\raisebox{.25in}{$T=$}
\hspace{.1in}
\begin{tiny}
\begin{tableau}
:.{1}.{2}.{4}\\
:.{3}.{6}\\
:.{5}\\
:;\\
\end{tableau}
\end{tiny}
\hspace{.3in}
\raisebox{.25in}{$S=$
\hspace{0in} \Big(}
\hspace{.1in}
\begin{tiny}
\begin{tableau}
:.{2}.{3}.{6}\\
:.{4}.{7}\\
:.{8}\\
:;\\
\end{tableau}
\end{tiny}
\hspace{.05in}
\raisebox{.2in}{,}
\hspace{.05in}
\begin{tiny}
\begin{tableau}
:.{1}.{5}\\
:.{9}\\
:;\\
:;\\
\end{tableau}
\end{tiny}
\hspace{.05in}
\raisebox{.25in}{\Big)}
\hspace{.4in}
\raisebox{.25in}{$U=$}
\hspace{.1in}
\begin{tiny}
\begin{tableau}
:.{0}.{0}>{1}\\
:.{0}>{2}\\
:^{3}^{4}\\
:;\\
\end{tableau}
\end{tiny}
$$
According to the above definitions, their sets of inversions are $Inv(T)= \{ (4,3), (6,5) \}$ $Inv(S)= \{ (2,1), (3,1),(4,1),(7,1),(8,1),(6,4),(6,5),(7,5),(8,5)\}$, and $Inv(U)= \varnothing$.

\end{example}

We also define a few statistics special to domino tableaux.  Let $v(T)$ be the number of vertical dominos in $T$ and let $spin(T) = v(T)/2$.  For a signed permutation $w$, the {\it total color} $tc(w)$ is the number of negative letters in its one line notation.  This statistic is particularly well-behaved with respect to the domino Robinson-Schensted maps $G_r$.

\begin{theorem}[\cite{sw}, \cite{lam:growth}] Consider a signed permutation $w \in H_n$ and further let $G_r(w) = (P,Q)$ be its image under  the domino Robinson-Schensted map $G_r$ among same-shape pairs of domino tableaux with $2$-core $\delta_r$.  Then
$$tc(w) = spin(P) + spin(Q).$$
\label{theorem:colortospin}
\end{theorem}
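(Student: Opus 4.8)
The plan is to prove the identity one insertion at a time. Write $w = w_1 \cdots w_n$ and let $P^{(k)}$ be the insertion tableau after the first $k$ letters have been inserted, with $P^{(0)} = [\delta_r]$ the core and $P^{(n)} = P$; let $Q^{(k)}$ be the corresponding recording tableau, so $Q^{(n)} = Q$. Set $\epsilon_k = 1$ if $w_k$ is negative and $\epsilon_k = 0$ otherwise, so that $tc(w) = \sum_k \epsilon_k$. Since $spin(P) + spin(Q) = (v(P) + v(Q))/2$, it suffices to show that each insertion contributes exactly $2\epsilon_k$ to $v(P) + v(Q)$, that is,
\[
\bigl(v(P^{(k)}) - v(P^{(k-1)})\bigr) + \bigl(v(Q^{(k)}) - v(Q^{(k-1)})\bigr) = 2\epsilon_k,
\]
and then to sum over $k$ and divide by $2$.

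First I would remove $Q$ from the picture. Passing from $Q^{(k-1)}$ to $Q^{(k)}$ adds a single domino, occupying the cells $sh(P^{(k)}) / sh(P^{(k-1)})$; these are exactly the cells filled by the outermost domino just placed in $P^{(k)}$, so the recording domino and that outer domino have the same orientation. Hence $v(Q^{(k)}) - v(Q^{(k-1)}) = [\,o_k = V\,]$, where $o_k \in \{H,V\}$ is the orientation of the new outer domino of $P^{(k)}$ and $[\cdots]$ is $1$ if true and $0$ otherwise. The per-step identity therefore reduces to a statement about the insertion tableau alone:
\[
v(P^{(k)}) - v(P^{(k-1)}) = 2\epsilon_k - [\,o_k = V\,].
\]

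So the crux is the following single-insertion lemma: when a horizontal domino (color $\epsilon = 0$) is inserted, the number of vertical dominoes is unchanged if the new outer domino is horizontal and drops by one if it is vertical; when a vertical domino (color $\epsilon = 1$) is inserted, the number of vertical dominoes rises by one if the new outer domino is vertical and by two if it is horizontal. I would prove this by tracking the bumping cascade produced by a single insertion. In the generic case the inserted domino and every domino it bumps share the same orientation, the procedure mimics classical row (resp.\ column) insertion, no reorientations occur, and the count is immediate: an appended or bumped chain of horizontal dominoes leaves $v$ fixed, while a chain of vertical dominoes raises $v$ by exactly one, and in each case $o_k$ agrees with $\epsilon_k$, matching the displayed identity. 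The real work---and the main obstacle---is the interaction case flagged in the text, where a horizontal and a vertical domino meet and an $L$-shaped reconfiguration forces one or two dominoes to change orientation. Here I would analyze the finitely many local configurations of the Shimozono--White/Garfinkle bumping rules and verify that the net change in $v$ across each reconfiguration, together with how it propagates the cascade and fixes the terminal orientation $o_k$, always balances to the asserted value.

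Finally, I would keep in reserve an alternative that makes the bookkeeping uniform in $r$: van Leeuwen's description of $G_r$ by a growth diagram reduces the whole identity to a local computation. Assigning to each covering edge the contribution $\tfrac12[\text{the added domino is vertical}]$ and reading the colors of $w$ off the crossings, one checks square by square that each local growth rule conserves the appropriate quantity, with a defect of exactly $2\epsilon$ at a colored crossing and none elsewhere; telescoping along the two boundary paths that read off $P$ and $Q$ then yields $spin(P)+spin(Q)=tc(w)$ directly. This packages the same case analysis into a fixed finite list of local rules independent of $n$ and $r$, though the orientation-change cases remain the substantive point.
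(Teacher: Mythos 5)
The paper itself offers no proof of this theorem: it notes the statement is clear when $r \geq n-1$ (where horizontal and vertical dominos never interact) and otherwise cites \cite{sw} for $r=0$ and \cite{lam:growth} for general $r$. Your two proposed routes --- per-insertion induction and growth diagrams --- are exactly the strategies of those two references, so the approach is the right one; the question is whether your sketch goes through, and there it has a genuine gap.

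The concrete false step is the claim that the cells of $sh(P^{(k)})/sh(P^{(k-1)})$ ``are exactly the cells filled by the outermost domino just placed in $P^{(k)}$.'' This holds only in the non-interacting cases. Take $r=0$ and $w = \overline{2}\,1$: inserting $\overline{2}$ gives the vertical domino $2$ on cells $(1,1),(2,1)$; inserting $1$ then places the horizontal domino $1$ on $(1,1),(1,2)$ and bumps domino $2$, which re-forms \emph{horizontally} on $(2,1),(2,2)$. The shape grows by the cells $(1,2)$ and $(2,2)$ --- a vertical domino shape --- but these two cells are split between the supports of dominos $1$ and $2$ in $P^{(2)}$: no single domino of the insertion tableau occupies them, and indeed every domino of $P^{(2)}$ is horizontal while the recording domino is vertical. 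So the quantity $o_k$, as you define it, does not exist in precisely the cases that matter. The identity $v(Q^{(k)})-v(Q^{(k-1)}) = [\,\text{shape growth is vertical}\,]$ is true by the definition of $Q$, so your architecture can be repaired by defining $o_k$ as the orientation of the shape-growth region rather than of a domino of $P$; but then the entire content of the theorem sits in the single-insertion lemma for the interaction cases, which you explicitly defer (``analyze the finitely many local configurations'') and never carry out. In the example above the ledger balances only because the bump flips domino $2$ from vertical to horizontal ($v(P)$ drops by one) at the same moment the shape grows vertically ($v(Q)$ rises by one), giving $-1+1 = 2\epsilon_k = 0$. Verifying that this compensation occurs in every bumping configuration, uniformly in the rank $r$, is the substance of the proofs in \cite{sw} and \cite{lam:growth}; a proposal that leaves it as a plan, and whose one structural claim about the bumping is false, has not yet proved the theorem.
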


\noindent
This result is clear when $r \geq n-1$.  It was verified for $r=0$ in \cite{sw} and extended to all $r$ in \cite{lam:growth}.  Of particular interest to us is the immediate observation that the sum of the spins of the recording and tracking tableaux for the maps $G_r$ is independent of $r$.

For a standard domino tableau $T$, let $eh(T)$ and $ev(T)$ denote the number of horizontal dominos in even index rows and vertical dominos is even index columns of $T$, respectively.  If we let $d(T)$ denote the number of squares with a positive label which lie both in an even row and even column of $T$, then $eh(T)+ev(T)= d(T)$.

\subsection{The type $A$ sign character}

We can now state the result of A.~Reifegerste and J.~Sj\"ostrand which our main results generalize.

\begin{theorem}[\cite{reifegerste},\cite{sjostrand}]
Consider a permutation $w \in S_n$ and let $RS(w) = (P,Q)$ be its image under the classical Robinson-Schensted map.  Then
$$ sign(w) = (-1)^{e} \cdot sign(P) \cdot sign(Q)$$
where $e=e(P)$ is the sum of the lengths of all the even-index rows of $P$.
\end{theorem}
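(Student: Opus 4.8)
The plan is to pass to signs and prove the equivalent congruence $inv(w) \equiv e + inv(P) + inv(Q) \pmod 2$, then to propagate it across each fiber of $RS$ by elementary Knuth moves. First I would record two elementary observations. Since $P$ and $Q$ share a shape $\lambda$, the statistic $e = e(P) = \sum_{i\text{ even}}\lambda_i$ depends only on $\lambda$; and by the definition of an inversion in a standard tableau, $inv(T)$ depends only on the assignment sending each entry to the index of \emph{its row}, not on its column. In particular $inv(T)=0$ exactly when the rows of $T$ are the consecutive blocks $1,\dots,\lambda_1$ then $\lambda_1{+}1,\dots$ and so on, i.e.\ when $T$ is the row-superstandard tableau $T_\lambda$.

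Next I would set up the propagation. Fix $\lambda$ and consider the defect $\delta(w)=sign(w)\cdot(-1)^{e}\cdot sign(P(w))\cdot sign(Q(w))\in\{\pm1\}$; the theorem asserts $\delta\equiv 1$. An elementary Knuth move on $w$ interchanges two adjacent letters, hence changes $inv(w)$ by $\pm1$, fixes $P$, and alters $Q$ by interchanging the two cells occupied by a pair of \emph{consecutive} labels. The crux is a local parity lemma: interchanging the cells of two consecutive values in a standard tableau changes $inv$ by an odd number. This follows from column-independence together with a pairwise count: for any third entry $z$, the two inversions it forms with the swapped pair contribute the same total before and after, so the only net change comes from the mutual pair, which flips. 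Consequently a Knuth move flips both $sign(w)$ and $sign(Q)$ while fixing $e$ and $sign(P)$, so $\delta$ is preserved; applying the same reasoning to $w^{-1}$, via $RS(w^{-1})=(Q,P)$, shows dual Knuth moves preserve $\delta$ as well. Since Knuth and dual Knuth moves connect any two permutations of a fixed shape — first bringing $Q$ to $T_\lambda$, then $P$ to $T_\lambda$ — it suffices to verify the theorem at the single involution $w_\lambda=RS^{-1}(T_\lambda,T_\lambda)$.

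For this base case $sign(P)=sign(Q)=1$ because $inv(T_\lambda)=0$, so I must show $sign(w_\lambda)=(-1)^{e}$. Writing the involution $w_\lambda$ as a product of $t=(n-f)/2$ transpositions, where $f$ is its number of fixed points, gives $sign(w_\lambda)=(-1)^{t}$. By Schützenberger's theorem $f$ equals the number of odd-length columns of $\lambda$, so $n-f=\sum_j 2\lfloor \lambda'_j/2\rfloor$ and $t=\sum_j \lfloor \lambda'_j/2\rfloor$. Finally, since the rows reaching column $j$ are exactly the rows $1,\dots,\lambda'_j$, one has $\lfloor \lambda'_j/2\rfloor=\#\{i\text{ even}:\lambda_i\ge j\}$, and summing over $j$ yields the clean identity $\sum_j\lfloor \lambda'_j/2\rfloor=\sum_{i\text{ even}}\lambda_i=e$. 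Thus $\delta(w_\lambda)=(-1)^{e}(-1)^{e}=1$, completing the base case and hence the proof.

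I expect the main obstacle to be the local parity lemma together with the precise bookkeeping of how an elementary Knuth move transforms the recording tableau: one must confirm that the move always interchanges two cells carrying consecutive labels — rather than effecting a more complicated rearrangement — so that the pairwise cancellation applies and $sign(Q)$ genuinely flips. Once that dual Knuth correspondence is pinned down, the base case is comparatively mechanical, resting on the cited fixed-point theorem and the elementary column-counting identity above.
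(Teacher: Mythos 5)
The paper does not actually prove this theorem; it is quoted as background from the cited works of Reifegerste and Sj\"ostrand, so there is no internal proof to compare yours against. Judged on its own, your argument is correct and complete modulo standard facts that you would need to cite. The propagation step works: an elementary Knuth move is an adjacent transposition of the word, so it flips $sign(w)$, it fixes $P$ by Knuth's theorem, and it replaces $Q$ by the tableau obtained by exchanging the cells of two consecutive labels --- this is the standard fact that Knuth moves on a word induce elementary dual equivalence moves on the recording tableau (Haiman; see also Sagan or Stanley). Since both $Q$ and $Q'$ are standard, the two exchanged cells lie in distinct rows and columns, so your pairwise count correctly gives a change of exactly $\pm 1$ in $inv(Q)$; the $P$-side follows by the same argument applied through Sch\"utzenberger's symmetry $RS(w^{-1})=(Q,P)$, and bijectivity of $RS$ plus connectedness of Knuth classes gives the reduction to the pair $(T_\lambda,T_\lambda)$. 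The base case is also right: Sch\"utzenberger's fixed-point theorem gives $t=\sum_j \lfloor \lambda'_j/2\rfloor$ transpositions, and the interchange identity $\sum_j \lfloor \lambda'_j/2\rfloor=\sum_{i\ \text{even}}\lambda_i=e$ is elementary; note your argument in fact shows that \emph{every} involution of shape $\lambda$ has sign $(-1)^e$, which is exactly the involution case of the theorem since involutions satisfy $P=Q$. The one item you flagged as the main obstacle --- that the move on $Q$ is always a swap of consecutive labels rather than a more complicated rearrangement --- is indeed the right thing to worry about, but it is a known lemma rather than a gap, so with precise citations your proof stands.
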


The following is a special case of a result on the sign characters of the complex reflection groups $G(r,p,n)$.

\begin{theorem}[\cite{mps}]
Consider $w \in H_n$ and let $G_\infty(w) = (P,Q)$ be its image among same-shape standard bitableaux, where $P=(P_1,P_2)$ and $Q=(Q_1,Q_2)$.  Then
$$ sign(w) = (-1)^{e} \cdot (-1)^{spin(P)+spin(Q)}  \cdot sign(P) \cdot sign(Q)$$
where $e=e(P_1)+e(P_2)$ is the sum of the lengths of all the even-index rows of the constituent tableaux of $P$.
\label{theorem:mps}
\end{theorem}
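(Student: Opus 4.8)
The plan is to reduce the type~$B$ identity to two independent instances of the classical (type~$A$) formula quoted just above. First I would unwind the definition of $G_\infty$ and observe that the algorithm decouples completely according to the sign of the inserted letters: reading $w=w_1\cdots w_n$ from left to right, a positive letter is inserted by its value into the first tableau only and a negative letter by its absolute value into the second tableau only, the two blocks never interacting. Writing $p$ for the subword of positive values and $q$ for the subword of absolute values of the negative letters, each taken in order of position, it follows that $P_1,P_2$ are precisely the classical insertion tableaux of $p,q$, while the recording tableaux $Q_1,Q_2$ are obtained from the classical recording tableaux $\tilde Q_1,\tilde Q_2$ of $p,q$ by the order-preserving relabelings of their entries onto the position sets $A=\{k:w_k>0\}$ and $B=\{k:w_k<0\}$. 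Since an order-preserving relabeling leaves the inversion set unchanged, $sign(Q_1)=sign(\tilde Q_1)$ and $sign(Q_2)=sign(\tilde Q_2)$.

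Applying the type~$A$ theorem to $p$ and to $q$ then gives $(-1)^{inv(p)}=(-1)^{e(P_1)}\,sign(P_1)\,sign(\tilde Q_1)$ and $(-1)^{inv(q)}=(-1)^{e(P_2)}\,sign(P_2)\,sign(\tilde Q_2)$. From the bitableau definition of inversions I would record that $inv(P)=inv(P_1)+inv(P_2)+C_P$ and $inv(Q)=inv(Q_1)+inv(Q_2)+C_Q$, where $C_P$ counts the pairs $(x,y)$ with $x$ an entry of $P_1$, $y$ an entry of $P_2$ and $x<y$, and $C_Q$ is the analogous cross-count for $Q$. Because the entries of $Q_1,Q_2$ are exactly the position sets $A,B$, the quantity $C_Q$ equals the number $M$ of pairs $i<j$ with $w_i>0$ and $w_j<0$. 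Finally, since $spin(T)=|T_2|/2$ for a bitableau, one has $spin(P)+spin(Q)=|P_2|=|B|=tc(w)$ (the bitableau case of Theorem~\ref{theorem:colortospin}), so the spin factor contributes $(-1)^{|B|}$.

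It then remains to check the parity identity $\ell(w)\equiv e+|B|+inv(P)+inv(Q)\pmod 2$. Here I would invoke the standard length formula $\ell(w)=inv(w)+\sum_{w_k<0}|w_k|$ (equivalently, $(-1)^{\ell(w)}$ is the determinant of $w$ acting in the reflection representation, which disposes of any worry about the generator convention), and split the signed count $inv(w)$ by the signs of the entries into $inv(p)$, the coinversion count $coinv(q)$ of the negative block, and the mixed count $M$, using $coinv(q)\equiv\binom{|B|}{2}+inv(q)$. Substituting the two type~$A$ relations together with $sign(Q_i)=sign(\tilde Q_i)$ and $C_Q=M$, all of $inv(p)$, $inv(q)$, $M$, and the two copies of $e$ cancel modulo $2$, and the claim collapses to the single elementary statement $C_P\equiv\binom{|B|}{2}+|B|+\sum_{y\in P_2}y\pmod 2$. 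This is immediate from the exact count $C_P=\sum_{y\in P_2}y-|B|-\binom{|B|}{2}$, obtained by adding, over each entry $y$ of $P_2$, the number $y-1$ of smaller values and then subtracting the $\binom{|B|}{2}$ pairs internal to $P_2$.

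I expect the main obstacle to be bookkeeping rather than any deep idea. One must verify carefully that $G_\infty$ genuinely restricts to two non-interacting classical insertions with order-preserving recording, so that the type~$A$ theorem applies verbatim to $p$ and $q$; and one must keep the two distinct cross-inversion counts $C_P$ (on values) and $C_Q$ (on positions) straight, along with the $coinv$-versus-$inv$ discrepancy on the negative block that produces the $\binom{|B|}{2}$ correction. Once these are pinned down, the concluding identity is a routine count.
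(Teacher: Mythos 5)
Your proof is correct, and it necessarily differs from the paper's treatment, because the paper does not actually prove this statement: Theorem \ref{theorem:mps} is imported from the preprint \cite{mps}, where it arises as a special case of a sign formula valid for all complex reflection groups $G(r,p,n)$ (the relevant case being $H_n\cong G(2,1,n)$). Your argument is a self-contained reduction to the quoted type $A$ theorem of Reifegerste and Sj\"ostrand: since $G_\infty$ splits into two non-interacting classical insertions on the positive subword $p$ and the absolute-value subword $q$, with recording tableaux relabelled order-preservingly onto the position sets $A$ and $B$, the type $A$ formula applies twice and the rest is parity bookkeeping. I checked that the bookkeeping closes: with $\ell(w)=inv(w)+\sum_{w_k<0}|w_k|$ and $inv(w)=inv(p)+\bigl(\binom{|B|}{2}-inv(q)\bigr)+M$, the exponent on the right-hand side of the theorem reduces modulo $2$ to $inv(p)+inv(q)+|B|+C_P+M$, so the identity indeed collapses to $C_P\equiv\binom{|B|}{2}+|B|+\sum_{y\in P_2}y\pmod 2$, which your exact count $C_P=\sum_{y\in P_2}y-|B|-\binom{|B|}{2}$ settles; the identifications $C_Q=M$ and $spin(P)+spin(Q)=|B|$ are also right. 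One subtlety you implicitly handled: the paper's definition of bitableau cross-inversions (smaller entry in $T_1$, larger in $T_2$) and its worked example use opposite conventions, but since $P$ and $Q$ have equal shapes, switching conventions changes $inv(P)+inv(Q)$ by the even number $2|P_1|\,|P_2|$, so your proof is valid under either reading. As for what each approach buys: yours makes the $G_\infty$ base case of this paper logically independent of the unpublished \cite{mps}, using only type $A$ results and elementary counting; the paper's citation buys the greater generality of $G(r,p,n)$ and defers to the source it relies on anyway.
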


\subsection{Cycles}
The most technical aspect of this work lies in the notion of a cycle in a domino tableau.
First defined in \cite{garfinkle1}, cycles have appeared in various settings, including \cite{carre-leclerc}, \cite{vanleeuwen:edge}, \cite{vanleeuwen:bijective}, and \cite{pietraho:relation}.  We provide a brief introduction, beginning with a few definitions.

For a standard domino tableau $T \in SDT_r(n)$, we will say the square $s_{ij}$ in row $i$ and column $j$ of $T$ is {\it variable} when $i+j \equiv r \, \mod 2$, otherwise,
we will call it {\it fixed}.  Following \cite{garfinkle1}, we further differentiate variable squares by saying
$s_{ij}$ is of {\it type $X$} if $i$ is odd and of {\it type $W$} otherwise.
Write $D(k, T )$ for the domino labeled by the positive integer $k$ in $ T$ and $supp \,  D(k, T )$ for its underlying squares. Write $label \, s_{ij}$ for the integer label of $s_{ij}$ in $T$  and let $label \, s_{ij}$ = 0 if either $i$ or $j$ is less than or equal to
zero, and $label \, s_{ij} = \infty$ if $i$ and $j$ are positive but $s_{ij}$ is not a square in $T$ .

\begin{definition}
Let $supp \, D(k,T)= \{s_{ij},s_{i+1,j}\}$ or
$\{s_{i,j-1},s_{ij}\}$ and suppose that the square  $s_{ij}$ is fixed. Define
a new domino $D'(k)$ labeled by the integer $k$ by letting $supp \,
D'(k,T)$ equal to
    \begin{enumerate}
        \item $\{s_{ij}, s_{i-1,j}\}$     if $k< label \, s_{i-1,j+1}$
        \item $\{s_{ij}, s_{i,j+1}\}$    if $k> label \, s_{i-1,j+1}$
    \end{enumerate}
If  $supp \, D(k,T)= \{s_{ij},s_{i-1,j}\}$
or $\{s_{i,j+1},s_{ij}\}$ and the square  $s_{ij}$ is fixed, then
define $supp \, D'(k,T)$ to be
    \begin{enumerate}
        \item $\{s_{ij},s_{i,j-1}\} $    if $k< label \, s_{i+1,j-1}$
        \item $\{s_{ij},s_{i+1,j}\}$         if $k> label \, s_{i+1,j-1}$
    \end{enumerate}
\end{definition}

\begin{definition}
For $T \in SDT_r(n)$, the {\it cycle $c=c(k,T)$ through $k$} is the set of integers defined by the condition that $l \in c$ if
either
    \begin{enumerate}
        \item $l=k$,
        \item $supp \, D(l,T) \cap supp \, D'(m,T) \neq
        \emptyset$ for some $m \in c$, or
        \item $supp \, D'(l,T) \cap supp \, D(m,T) \neq
        \emptyset$ for some $m \in c$.
    \end{enumerate}
We identify the labels contained in a cycle with their underlying dominos.
\end{definition}

If $c$ is a cycle in $T$, it is possible to construct a tableau $MT(T,c)$ by replacing every domino $D(l,T) \in c$ by the shifted domino $D'(l,T)$ defined above.  This map produces a standard domino tableau,  preserves the labels of the fixed squares of $T$, and changes the labels of the variable squares in $c$.
The shape of $MT(T,c)$ either equals the original shape of $T$, or one square will be removed (or added to the core) and one will be added.
In the first case, the cycle $c$ is called {\it closed}; otherwise, it is called {\it open}.
For an open cycle $c$ of a tableau $T$, we will write $S_b(c)$ for the square that has been removed (or added to the core) by moving through $c$.  It is the {\it beginning square} of the $c$. Similarly, we will write $S_f(c)$ for the square that is added to the shape of $T$, the {\it final square of $c$}. Note that $S_b(c)$ and $S_f(c)$ are always variable squares.

\begin{example}  Below are two diagrams of a standard domino tableau of rank $2$, the first unadorned, and then with its cycles highlighted and the final squares of the open cycles displayed as dashed boxes.  There is one closed cycle, $c=\{11, 12\}$.

\vspace{.1in}

\begin{center}
    \raisebox{.15in}{\includegraphics[width=.85in]{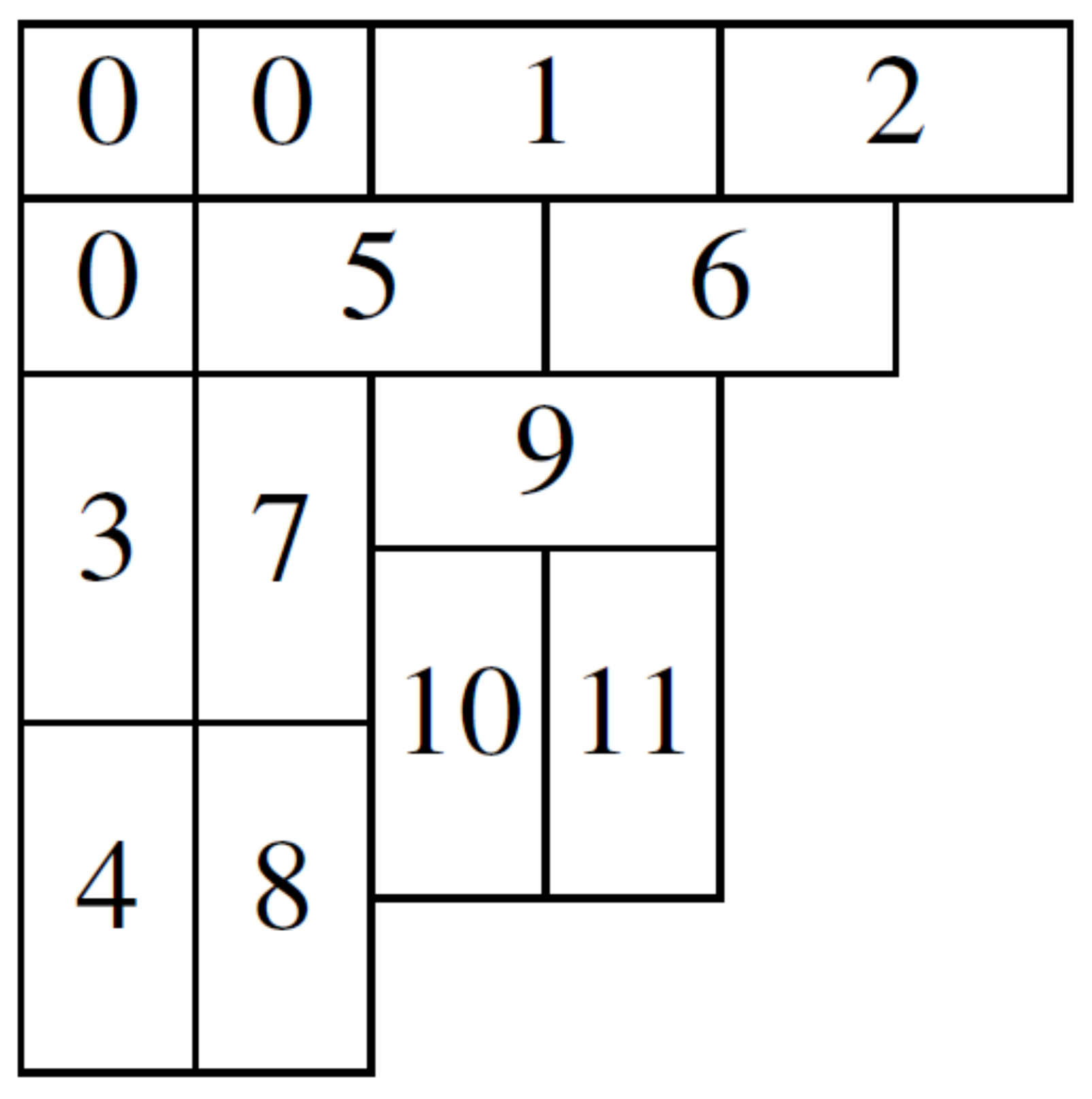}} \hspace{.2in}
    \includegraphics[width=1in]{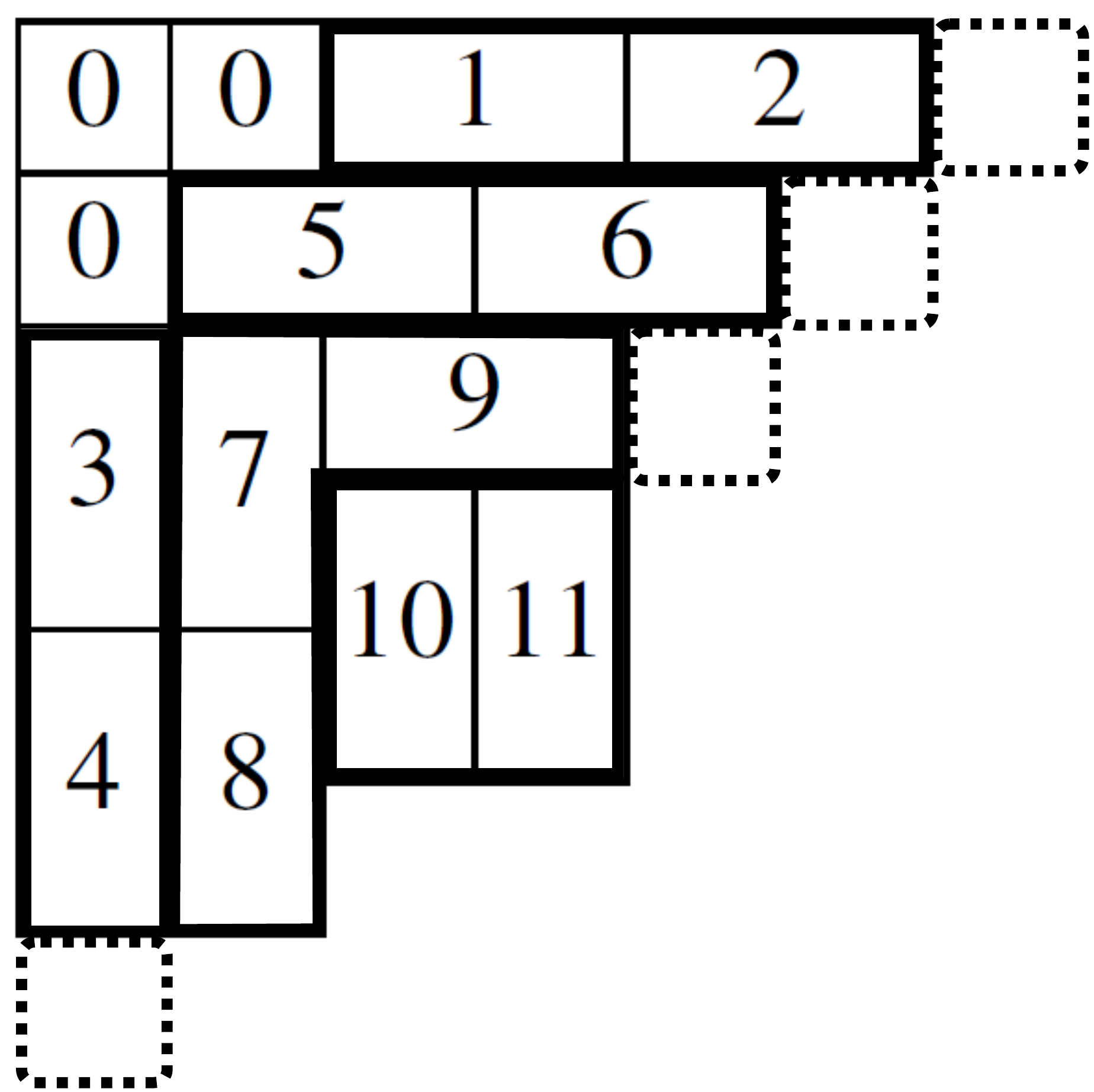}
\end{center}

\end{example}

Each square adjacent to the core in $T \in SDT_r(n)$ is the beginning square for some open cycle.  We call the set of all such open cycles $\Delta(T)$.  One fact about moving through open cycles will be especially relevant and is apparent in the example above.

\begin{proposition}[\cite{garfinkle1},\cite{pietraho:knuth}] Consider $T \in SDT_r(n)$.  If $c \in \Delta(T)$, then the variable squares $S_b(c)$ and $S_f(c)$ are both of type $X$ or both of type $W$.  In particular, $S_b(c)$ lies in an even row and column of $T$ iff $S_f(c)$ does as well.
\label{proposition:cycles}
\end{proposition}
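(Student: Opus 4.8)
The plan is to reduce the ``in particular'' clause to the main assertion and then to analyze the lattice path swept out by moving through the cycle.

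First I would record the elementary fact that the type of a variable square is determined by the parity of its row alone, and that for a variable square $s_{ij}$ the relation $i+j\equiv r \pmod 2$ links the parities of $i$ and $j$. Consequently, if $r$ is odd then no variable square can lie in both an even row and an even column, so both sides of the ``in particular'' equivalence are automatically false; while if $r$ is even, a variable square lies in an even row and an even column exactly when its row is even, that is, exactly when it is of type $W$. In either case the ``in particular'' statement is immediate once we know that $S_b(c)$ and $S_f(c)$ have the same type. Thus it suffices to prove the type claim.

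Next I would set up the geometry of the move $MT(T,c)$. Since adjacent squares have opposite parity of $i+j$, every domino covers exactly one fixed and one variable square, and by construction the passage from $D(l,T)$ to $D'(l,T)$ keeps the fixed square and moves only the variable square to another neighbor of it. Tracing the overlap conditions defining membership in $c$, the cycle therefore sweeps out a self-avoiding edge path
$$ S_b(c) = u_0,\; f_1,\; u_1,\; f_2,\; \ldots,\; f_k,\; u_k = S_f(c) $$
alternating between variable squares $u_t$ and fixed squares $f_t$, in which $u_{t-1}$ and $u_t$ are two neighbors of the common fixed square $f_t$. A short local check shows that the type (row parity) is preserved when $u_{t-1},u_t$ are opposite neighbors of $f_t$ (the path passes straight through $f_t$) and is reversed when they are adjacent neighbors (the path turns at $f_t$). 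Encoding each variable square by $(i \bmod 2, j\bmod 2)\in\mathbb{Z}_2\times\mathbb{Z}_2$, a straight pivot contributes $(0,0)$ and a turning pivot contributes $(1,1)$; hence $S_b(c)$ and $S_f(c)$ share a type precisely when the number of turning pivots is even, equivalently when the net change in row index $i(S_f)-i(S_b)$ is even. Note that the variable constraint $i+j\equiv r$ already forces the total displacement in $\mathbb{Z}_2\times\mathbb{Z}_2$ to be $(0,0)$ or $(1,1)$, so the entire content of the proposition is to rule out the second possibility.

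The heart of the matter is thus to show that the number of turning pivots is even for $c\in\Delta(T)$, and I expect this to be the main obstacle: the local pivot rules individually do not preserve type, so a genuinely global argument is needed. This is precisely where the hypothesis that $S_b(c)$ is adjacent to the $2$-core $\delta_r$ should enter, controlling the two ends of the path. The approach I would pursue is to induct on the length $k$ of the cycle, using the inequalities comparing $k$ with $label\, s_{i-1,j+1}$ (and their mirror images) in the definition of $D'$ to decide, at each fixed square, whether the pivot is straight or turning, and to show that turning pivots necessarily occur in pairs. An attractive alternative would be to close the open path into a loop using the diagonal symmetry of the staircase core, after which the claim reduces to the fact that a closed lattice loop turns an even number of times; I would expect this route to require some care in matching types under the reflection when $r$ is odd. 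In either case, to carry out the turn-counting cleanly I would lean on the structural description of cycles and of their beginning and final squares established in \cite{garfinkle1} and \cite{pietraho:knuth}.
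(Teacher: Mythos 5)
Your reduction is sound as far as it goes: the ``in particular'' clause does follow from the type claim exactly as you argue, the alternating path picture is an accurate rendering of the overlap conditions, and your local analysis (straight pivots preserve row parity, turning pivots flip it) is correct. The trouble is that the proof stops there. The statement you isolate as ``the heart of the matter'' --- that a cycle in $\Delta(T)$ produces an even number of turning pivots --- is not a lemma on the way to the proposition; it \emph{is} the proposition, restated in your coordinates. For it you offer only two undeveloped strategies, and you close by proposing to lean on \cite{garfinkle1} and \cite{pietraho:knuth}, which is circular here: those are precisely the references from which the paper imports this result (the paper itself gives no proof, so there is nothing to inherit from it either).

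The gap is genuine and not a routine verification left to the reader, because the claim you still owe is \emph{false} without the hypothesis $c\in\Delta(T)$, so no local turn-counting argument of the kind you sketch can succeed. Take $r=0$ and let $T$ be the tableau of shape $(2,2)$ whose dominoes $1$ and $2$ are horizontal, in rows $1$ and $2$ respectively. Then $D'(2)=\{s_{21},s_{31}\}$, so $c=\{2\}$ is an open cycle whose path $s_{22},\,s_{21},\,s_{31}$ turns exactly once: $S_b(c)=s_{22}$ has type $W$ while $S_f(c)=s_{31}$ has type $X$. (This $c$ is not in $\Delta(T)$; the unique cycle in $\Delta(T)$ here is $\{1\}$, with $S_b=s_{11}$ and $S_f=s_{13}$, both of type $X$.) In particular, your proposed induction step, ``turning pivots necessarily occur in pairs,'' is simply untrue for open cycles in general, and the reflection idea faces the same obstruction. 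A correct proof must exploit adjacency of $S_b(c)$ to the staircase core in an essential, global way --- this is what the cited sources actually do --- whereas your sketch only observes that the hypothesis ``should enter.''
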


The order in which one applies the moving through map to cycles in a set $U$ is immaterial by \cite{garfinkle1}(1.5.29), allowing us to write $MT(T,U)$ for the tableau obtained by moving through all of the cycles in the set $U$.

\section{Sign of colored permutations}

Based on the definitions of domino tableaux statistics enumerated in Section \ref{subsection:stats} we are ready to state a formula for reading the sign of a colored permutation from its image under any of the domino Robinson-Schensted maps.

\begin{theorem}
Consider a signed permutation $w \in H_n$ and let $G_r(w) = (P,Q)$ be its image among same-shape standard domino tableaux of rank $r$.  Then
$$ sign(w) = (-1)^{d} \cdot (-1)^{spin(P)+spin(Q)} \cdot sign(P) \cdot sign(Q)$$
where $d=d(P)$ denotes the number of non-core squares which lie concurrently in an even row and column of $P$, and $spin$ denotes the spin of a tableau.
\label{theorem:main}
\end{theorem}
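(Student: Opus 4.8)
The plan is to follow the three-stage reduction sketched in the introduction, organized around two observations that make several pieces of the formula invariant in $r$. By Theorem~\ref{theorem:colortospin} the factor $(-1)^{spin(P)+spin(Q)}$ equals $(-1)^{tc(w)}$, which depends only on $w$, and $sign(w)=(-1)^{\ell(w)}$ is likewise intrinsic to $w$. Hence the theorem is equivalent to the single identity $(-1)^{d(P)}\,sign(P)\,sign(Q)=sign(w)\cdot(-1)^{tc(w)}$, whose right-hand side is independent of the chosen rank. I would prove this identity first for large $r$, then for involutions at every $r$, and finally for all signed permutations.

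\textbf{Step 1: large $r$.} For $r\geq n-1$ the dominoes recording the positive and the negative letters of $w$ never interact, so $G_r(w)$ splits into a block of horizontal dominoes encoding the first constituent of $G_\infty(w)$ and a block of vertical dominoes encoding the second. Each domino carries exactly one marked square, and I would trace the large-$r$ dictionary between dominoes and boxes carefully enough to identify $inv(P)$ and $inv(Q)$ with the bitableau inversion counts of Section~\ref{subsection:stats} and, using $d(T)=eh(T)+ev(T)$, to match $d(P)$ with the even-row statistic $e$ of Theorem~\ref{theorem:mps}. As the spin factors already agree, the claim in this range follows from Theorem~\ref{theorem:mps}.

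\textbf{Step 2: involutions.} If $w$ is an involution then $P=Q$, so $sign(P)\,sign(Q)=1$ and $(-1)^{spin(P)+spin(Q)}=1$, and the formula collapses to $sign(w)=(-1)^{d(P)}$. It therefore suffices to show that the parity of $d(P)$ does not depend on $r$, which I would establish by descending induction seeded by Step~1, using the explicit passage from $G_{r+1}(w)$ to $G_r(w)$ of \cite{pietraho:relation} realized by moving through the open cycles adjacent to the core. Proposition~\ref{proposition:cycles} is the key input: since $S_b(c)$ and $S_f(c)$ lie simultaneously in an even row and column or in neither, the boundary relocation of each cycle alters $d$ by an even amount, and I would show the interior displacements of the cycle contribute evenly as well, giving $d(P^{(r)})\equiv d(P^{(r+1)})\bmod 2$.

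\textbf{Step 3: general signed permutations.} To pass from involutions to arbitrary $w$ at a fixed rank I would use Ta\c{s}k{\i}n's plactic relations \cite{taskin:plactic}, which describe the fibers of the insertion map $w\mapsto P(w)$. Applying such relations to $w$ with its insertion tableau held fixed transforms the recording tableau $Q$ into $P$, producing an involution for which the identity is already known. Since $P$, and hence $d(P)$, $spin(P)$ and $inv(P)$, is unchanged along this path, it remains only to verify that the parity of $\ell(w)+spin(Q)+inv(Q)$ is preserved by each elementary relation; granting this, the identity for $w$ follows from the involution case. I expect Step~2 to be the principal obstacle: while Proposition~\ref{proposition:cycles} controls the two boundary squares of each open cycle, the domino inversion statistic is sensitive to the relocation of every variable square in the cycle, so tracking the parity of $d$ through the consecutive-rank comparison is exactly the delicate cycle bookkeeping the paper flags as its most technical ingredient, whereas Step~3, though laborious because Ta\c{s}k{\i}n's relations come in several families, is conceptually routine.
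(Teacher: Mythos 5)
Your three-stage architecture (large $r$, then involutions at all ranks via $t_{r,r+1}$ and cycles, then all of $H_n$ via Ta\c{s}k{\i}n's relations) is exactly the paper's, and your Step 3 plan coincides with the paper's case-by-case analysis of the generators $D_i^r$. However, two of your claims are genuinely wrong as stated.

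First, in Step 2 you assert that for an involution $(-1)^{spin(P)+spin(Q)}=1$, so that the formula ``collapses to $sign(w)=(-1)^{d(P)}$.'' This is false: $spin$ is half-integral, and $spin(P)+spin(Q)=2\,spin(P)=v(P)=tc(w)$ is the number of vertical dominos, which is odd whenever the involution has an odd number of negative letters. For $w=\overline{1}\in H_1$ one has $d(P)=0$ but $sign(w)=-1$; the correct collapsed identity is $sign(w)=(-1)^{d(P)}\cdot(-1)^{tc(w)}$, and the paper's Lemma \ref{lemma:larger} and its successor deliberately retain the factor $(-1)^{2\,spin(P)}$ because it is not trivial. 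Fortunately your own opening reduction via Theorem \ref{theorem:colortospin} already isolates $(-1)^{tc(w)}$, so the work you actually propose---rank-invariance of the parity of $d$---is still the right target, and it is easier than you fear: $d$ depends only on the shape and core, and moving through an open cycle alters the shape only at $S_b(c)$ and $S_f(c)$, so Proposition \ref{proposition:cycles} settles it outright; there are no ``interior displacements'' to control, and your closing paragraph conflates $d$ with the inversion statistic, which is irrelevant for involutions since $P=Q$ forces $sign(P)\,sign(Q)=1$.

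Second, your Step 1 would fail as written. At large rank the dictionary between dominos and bitableau boxes does \emph{not} identify inversion sets: the negative letters embed as vertical dominos whose relative row order is the reverse of their order in the second constituent tableau, so both the inversions among negative letters and the cross inversions are complemented rather than preserved. Concretely, for $w=\overline{4}\,\overline{3}\,2\,1$ and $r=3$ the left domino tableau has $inv(P)=1$ while the corresponding bitableau $R$ of $G_\infty(w)$ has $inv(R)=4$, so $sign(P)\neq sign(R)$; only the products $sign(P)\,sign(Q)$ and $sign(R)\,sign(S)$ agree, and proving that requires a complementation count absent from your sketch. The paper sidesteps this entirely by running Step 1 only for involutions, where both sign products are trivially $1$ and the only things to match are $d$ with $e$ and the color/spin factor---and since your Step 3 reduces arbitrary $w$ to an involution at the same fixed rank, the involution case of Step 1 is all you ever need. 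Restricting Step 1 accordingly repairs the argument and brings it in line with the paper's proof.
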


Our first goal is to verify the theorem for involutions in $H_n$.  There are two main tools, the sign formula for colored permutations under $G_\infty$ derived from  \cite{mps} and a description of the relationship between the maps $G_r$ and $G_{r+1}$ obtained in \cite{pietraho:relation}.  When $r$ is large relative to $n$, the relationship between $G_\infty$ and $G_r$ is simple and it is a trivial task to translate one sign formula into the other.  Using the map of \cite{pietraho:relation}, we then extend the result on involutions to all $r$.

Under the maps $G_r$, the left and right tableaux for an involution in $H_n$ coincide, see \cite{vanL:RS}.  To complete our proof, we examine the behavior of the sign character under the plactic relations on $H_n$ obtained in \cite{taskin:plactic}.  As plactic relations generate the equivalence classes of having the same left tableau under $G_r$, this extends the theorem to all of $H_n$.

\subsection{Involutions}  The goal of this section is to verify the claimed sign formula for involutions in $H_n$.  We follow the approach outlined above and start by translating the formula for the asymptotic map $G_\infty$ to the maps $G_r$ for $r \geq n-1$.

\begin{lemma} Let $i \in H_n$ be an involution and write $G_r(i) = (P,P)$.  For $r \geq n-1$,
$$sign(i) = (-1)^d \cdot (-1)^{2 spin(P)}$$
where $d=d(P)$ denotes the number of non-core squares which lie concurrently in an even row and column of $P$, and $spin$ denotes the spin of a tableau.
\label{lemma:larger}
\end{lemma}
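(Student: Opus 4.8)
The plan is to obtain the formula by specializing the established bitableau formula of Theorem~\ref{theorem:mps} to involutions and then transporting it across the explicit identification of $G_r$ with $G_\infty$ that is available once the positive and negative letters no longer interact, that is, for $r\ge n-1$. First I would record the specialization. Since $G_r(i)=(P,P)$ has equal left and right tableaux (\cite{vanL:RS}) and the recovery of $G_\infty(i)$ from $G_r(i)$ acts on each tableau separately, the bitableau image is again of the form $G_\infty(i)=(P^\infty,P^\infty)$ with $P^\infty=(P_1,P_2)$. Substituting into Theorem~\ref{theorem:mps} and using $sign(P^\infty)^2=1$ collapses that formula to
$$ sign(i)=(-1)^{e}\cdot(-1)^{2\,spin(P^\infty)},\qquad e=e(P_1)+e(P_2).$$
It then suffices to prove two transfer statements relating $P^\infty$ to $P$: that $spin(P)=spin(P^\infty)$, and that $d(P)=e(P_1)+e(P_2)$.

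The spin transfer is immediate. By Theorem~\ref{theorem:colortospin} the number of vertical dominoes of $P$ equals $2\,spin(P)=tc(i)$, the number of negative letters of $i$, and this is exactly $|P_2|=2\,spin(P^\infty)$; hence $spin(P)=spin(P^\infty)$. For the count $d(P)$ I would start from the decomposition $d(P)=eh(P)+ev(P)$ recorded in Section~\ref{subsection:stats}, where $eh(P)$ counts horizontal dominoes in even rows and $ev(P)$ counts vertical dominoes in even columns. In the regime $r\ge n-1$ the dominoes do not interact: the horizontal dominoes are appended to the right of the staircase core and reproduce $P_1$ with the row index literally preserved, so row $i$ of $P$ carries exactly as many horizontal dominoes as there are boxes in row $i$ of $P_1$; dually, the vertical dominoes are stacked beneath the core columns and reproduce $P_2$ with its row index transposed onto the column index, so column $j$ of $P$ carries exactly as many vertical dominoes as there are boxes in row $j$ of $P_2$. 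Summing over even rows and even columns gives $eh(P)=e(P_1)$ and $ev(P)=e(P_2)$, and therefore $d(P)=e(P_1)+e(P_2)=e$. Combining the two transfers with the specialized Theorem~\ref{theorem:mps} gives the lemma.

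The hard part will be the index bookkeeping in the previous paragraph: because $eh$ and $ev$ depend on the \emph{parity} of the row and column indices, I must know that the horizontal and vertical dominoes reproduce $P_1$ and $P_2$ with those indices tracked exactly, not merely that the correct number of dominoes appears. I would establish this by examining the non-interacting insertion directly. With the core $\delta_r$ for $r\ge n-1$, a positive letter creates or bumps a horizontal domino within exactly the row it would occupy under classical insertion into $P_1$, so the staircase contributes no shift and the row parity is preserved; the vertical case is identical after exchanging the roles of rows and columns. The points requiring care are that $r\ge n-1$ is genuinely large enough to force non-interaction and to keep every horizontal domino anchored to a core row, so that no domino is pushed into a row past the core; both follow from comparing the size of $\delta_r$ with the at most $n$ letters of $i$. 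Once index preservation is secured, the decomposition $d=eh+ev$ finishes the argument.
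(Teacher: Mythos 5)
Your proposal is correct and takes essentially the same approach as the paper's proof: specialize Theorem~\ref{theorem:mps} to the involution, identify $2\,spin(P)$ with $tc(i)=|R_2|$ via Theorem~\ref{theorem:colortospin}, and obtain $d=e$ by matching horizontal dominos in even rows of $P$ with squares in even rows of the first constituent tableau and vertical dominos in even columns of $P$ with squares in even rows of the second. The only difference is one of detail: you spell out the row/column-parity bookkeeping for the non-interacting insertion, which the paper compresses into the single sentence ``Equality follows.''
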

\begin{proof}
Let $G_\infty(i) = (R,R)$.
   By Theorem \ref{theorem:mps}, if we write $R=(R_1,R_2)$, then
$ sign(i) = (-1)^{e + |R_2|}$
where $e=e(R_1)+e(R_2)$ is the sum of the lengths of all the even-index rows of the constituent tableaux of $R$.  Now note that $tc(i)=|R_2|= 2 spin(P)$, so it remains to show that $d=e$.  The squares in even-indexed rows of $R_1$ correspond to horizontal dominos in even rows of $P$ and the squares in even-indexed rows of $R_2$ correspond to vertical dominos in even columns of $P$.  Equality follows.
\end{proof}

Next, we extend this formula to all values of $r$.   Let $r$ and $r'$ be non-negative integers and
suppose that $T \in SDT_r(n)$.  Following \cite{pietraho:relation}, we define a map $$t_{r,r'}:SDT_r(n) \rightarrow
SDT_{r'}(n)$$  by setting $t_{r,r'}(T) = T'$ whenever $G_r^{-1}(T,T) = G_{r'}^{-1}(T',T')$.  When $r$ and $r'$ are consecutive integers, this map has a particularly simple description in terms of cycles in a domino tableau. Given $T \in SDT_r(n)$, it is easy to produce a domino tableau of rank $r+1$ by moving through open cycles;  simply move thorough all the open cycles in $\Delta(T)$.  It is clear that the $2$-core of the resulting tableau is $\delta_{r+1}$.  What is perhaps surprising is that this map coincides with $t_{r,r+1}$:

\begin{theorem}[\cite{pietraho:relation}] Let  $t_{r,r+1}:SDT_r(n) \rightarrow
SDT_{r+1}(n)$  be defined as above.  Then
$$t_{r,r+1}(T) = MT(T,\Delta(T)).$$
\label{theorem:relation}
\end{theorem}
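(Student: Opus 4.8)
The plan is to strip the definition of $t_{r,r+1}$ down to a statement about the correspondences $G_r$ and $G_{r+1}$ themselves. By definition $t_{r,r+1}(T)=T'$ precisely when the involution $i:=G_r^{-1}(T,T)$ satisfies $G_{r+1}(i)=(T',T')$, and since $(T,T)$ is a same-shape pair, $G_r^{-1}(T,T)$ is a well-defined involution whose two tableaux are both $T$ (recall that under these maps the left and right tableaux of an involution coincide). Hence every $T\in SDT_r(n)$ arises this way, $t_{r,r+1}$ is defined on all of $SDT_r(n)$, and it suffices to prove that for each involution $i$ with $G_r(i)=(T,T)$ one has
$$G_{r+1}(i)=\bigl(MT(T,\Delta(T)),\,MT(T,\Delta(T))\bigr).$$
Two sanity checks make this plausible: $MT(T,\Delta(T))$ has $2$-core $\delta_{r+1}$ by the observation preceding the statement, and it is symmetric data (a single tableau used twice), matching the fact that $G_{r+1}$ sends involutions to diagonal pairs. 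Only the identification of the tableau actually produced by $G_{r+1}$ with $MT(T,\Delta(T))$ remains.

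I would establish this identity by induction on the number $n$ of dominos, peeling off the domino $D(n,T)$ carrying the largest label. Removing it yields $T_0\in SDT_r(n-1)$; via the usual recursive structure of a Robinson--Schensted-type algorithm (inverse insertion of the final step), $T_0$ is the tableau $G_r(i_0)$ of a smaller involution $i_0$, and $G_{r+1}(i)$ is obtained from $G_{r+1}(i_0)$ by re-inserting the largest letter(s) into the rank-$(r+1)$ core. The inductive hypothesis gives $G_{r+1}(i_0)=(MT(T_0,\Delta(T_0)),MT(T_0,\Delta(T_0)))$. The step then reduces to comparing, on one side, insertion of the top domino at rank $r$ followed by passage to $MT(T,\Delta(T))$, and on the other side, the move-through $MT(T_0,\Delta(T_0))$ followed by insertion at rank $r+1$; one must also control how the open-cycle structure $\Delta(T_0)$ is perturbed into $\Delta(T)$ by the presence of the top domino.

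The key technical input is therefore a commutation lemma: inserting the largest letter at rank $r$ and then moving through the open cycles adjacent to the core yields the same tableau as first moving through those open cycles (passing to rank $r+1$) and then inserting the largest letter into the enlarged core. Here Garfinkle's analysis of how cycles interact with the insertion and bumping procedure is needed, together with Proposition \ref{proposition:cycles}: since the beginning and final squares of each open cycle share a type, moving through $\Delta$ does not disturb the parity bookkeeping that governs where the top domino lands. The order-independence of the move-through map noted above lets one treat the open cycles one at a time and track the highest domino through each move independently, so that the commutation can be checked cycle by cycle.

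The main obstacle I anticipate is precisely this commutation step. The map $MT(T,\Delta(T))$ is defined by a local, geometric recipe --- shifting each domino of an open cycle onto its neighbor --- whereas $t_{r,r+1}$ is global, being assembled from the full insertion algorithm for $G_{r+1}$. Bridging the two amounts to showing that enlarging the core before insertion has the same net effect as inserting into the smaller core and afterward shifting everything outward along open cycles; the delicate regime is exactly where horizontal and vertical dominos interact and domino insertion departs from the classical bumping. An alternative route that localizes this difficulty is to pass to van Leeuwen's growth-diagram description of $G_r$, in which the dependence on $r$ enters only through the core boundary condition and open cycles have a direct interpretation, and then to verify that the local rule at each cell intertwines the change of core with the move-through map. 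Either way, once the single-step compatibility of core-change with cycle-moving is in hand, the induction closes routinely.
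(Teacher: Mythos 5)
First, a framing point: the present paper does not prove Theorem \ref{theorem:relation} at all --- it is quoted from \cite{pietraho:relation}, where it is the main result. So the benchmark is the proof in that paper, which consists precisely of the technical analysis your proposal defers. Your central ``commutation lemma'' --- that inserting the largest letter into the rank-$r$ core and then moving through the open cycles in $\Delta$ gives the same tableau as moving through first and then inserting into the rank-$(r+1)$ core --- is not proved in your proposal; it is named as the main obstacle, and it \emph{is} the entire content of the theorem. Proposition \ref{proposition:cycles} cannot stand in for it: it only constrains the types of the beginning and final squares of an open cycle and says nothing about how domino insertion and bumping interact with the moving-through map. Similarly, the plan to ``treat the open cycles one at a time'' while tracking the top domino needs justification that is absent: adding a single domino to $T_0$ can reorganize the cycle structure globally (cycles can merge, split, open, or close), so $\Delta(T)$ need not be any controlled perturbation of $\Delta(T_0)$, and order-independence of $MT$ does not address this.

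Second, the inductive scaffolding itself is flawed, indeed circular. You define $i_0 := G_r^{-1}(T_0,T_0)$, i.e.\ only through the rank-$r$ correspondence, and then assert that ``$G_{r+1}(i)$ is obtained from $G_{r+1}(i_0)$ by re-inserting the largest letter(s).'' That assertion presupposes a rank-independent, word-level relation between $i$ and $i_0$, which is exactly the kind of statement the theorem exists to supply (the map $t_{r,r'}$ is introduced precisely because no elementary such relation is evident). The ``usual recursive structure'' of Robinson--Schensted does not provide one: undoing the final insertion step at a diagonal pair $(T,T)$ deletes the top domino from the \emph{recording} tableau but alters the \emph{insertion} tableau by reverse bumping, so it produces a non-diagonal pair encoding the word of $i$ with its last letter removed --- generally not an involution, and not $(T_0,T_0)$. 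This failure is already visible in classical RS: for the involution $3412$, whose common tableau has shape $(2,2)$, deleting the box labeled $4$ leaves the tableau of the involution $132$, whereas undoing the last insertion yields the (non-diagonal) pair of the word $341$; and $132$ is not obtained from $3412$ by deleting any letter or cycle. So $T \mapsto T_0$ corresponds to no simple operation on the word of $i$, the relation between $i$ and $i_0$ is opaque without invoking $G_r^{-1}$, and the inductive step cannot get started. Repairing the argument requires either carrying out the insertion-versus-moving-through commutation analysis in full (which is what \cite{pietraho:relation} does, using Garfinkle's machinery) or finding a genuinely different reduction; as written, the proposal is a plan whose two load-bearing steps are, respectively, unproved and incorrect.
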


\begin{example}  Consider the involution $i=(5 \, 9 \, \overline{7} \, \overline{11} \, 1 \, 6 \, \overline{3} \, \overline{10} \, 2 \, \overline{8} \, \overline{4}) \in H_{11}$.  Its image under the domino Robinson-Schensted algorithm $G_2$ is a pair of tableaux $(P,P)$ with $P$ as below.
$$
\raisebox{.3in}{$P=$}
\hspace{.1in}
\begin{tiny}
\begin{tableau}
:.0.0>1>2\\
:.0>5>6\\
:^3^7>{9}\\
:;;^{10}^{11}\\
:^4^{8}\\
:;\\
\end{tableau}
\end{tiny}
\hspace{.4in}
\raisebox{.3in}{$P'=$}
\hspace{.1in}
\begin{tiny}
\begin{tableau}
:.0.0.0>1>2\\
:.0.0>5>6\\
:.0^7>{9}\\
:^3;^{10}^{11}\\
:;^{8}\\
:^4\\
\end{tableau}
\end{tiny}
$$

\vspace{.2in}

\noindent
There are three open cycles in $\Delta(P)$, mainly $\{1,2\}$, $\{3,4\}$, and $\{5,6\}$.  Moving through all three produces the tableau $P'$.  As claimed by Theorem \ref{theorem:relation}, this is also the image of $i$ under $G_3$; mainly $G_3(i) = (P',P')$.

\end{example}

In order to extend the involution sign formula to all values of $r$, it suffices to check that our tableau statistics are well-behaved with respect to the maps $t_{r,r+1}$ for all values of $r$.

\begin{lemma} Let $i \in H_n$ be an involution and write $G_r(i) = (P,P)$.  For $r \geq 0$,
$$sign(i) = (-1)^d \cdot (-1)^{2 spin(P)}$$
where $d=d(P)$ denotes the number of non-core squares which lie concurrently in an even row and column of $P$, and $spin$ denotes the spin of a tableau.
\end{lemma}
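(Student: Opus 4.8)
The plan is to argue by downward induction on $r$, taking the previous Lemma \ref{lemma:larger} as the base case (it gives the formula for all $r \geq n-1$) and using the map $t_{r,r+1}$ to descend from rank $r+1$ to rank $r$. By the defining property of $t_{r,r+1}$ together with the fact that left and right tableaux coincide for involutions, writing $P' := t_{r,r+1}(P)$ we have $G_{r+1}(i) = (P',P')$. Hence it suffices to show that the right-hand side $(-1)^{d(P)}(-1)^{2\,spin(P)}$ is unchanged when $P$ is replaced by $P'$: if $d(P)=d(P')$ and $2\,spin(P)=2\,spin(P')$, then the formula at rank $r+1$ (the inductive hypothesis) instantly yields the formula at rank $r$.

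The spin factor is the easy half. Applying Theorem \ref{theorem:colortospin} at both ranks gives $2\,spin(P) = tc(i) = 2\,spin(P')$, since $P=Q$ and $P'=Q'$ for an involution; as $tc(i)$ depends only on $i$ and not on $r$, the factor $(-1)^{2\,spin(P)}$ is automatically rank-independent.

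The substance of the argument is to show $d(P)=d(P')$. By Theorem \ref{theorem:relation} we have $P' = MT(P,\Delta(P))$, so I would analyze how the set of non-core squares lying simultaneously in an even row and even column changes under moving through the open cycles in $\Delta(P)$. The key structural fact is that moving through a cycle alters the underlying Young diagram only at its endpoints: for each $c \in \Delta(P)$ the beginning square $S_b(c)$ is absorbed into the core (so the $2$-core passes from $\delta_r$ to $\delta_{r+1}$) while the final square $S_f(c)$ is newly adjoined, and every other non-core square keeps its position, only its label and the local tiling possibly changing. Consequently the non-core squares of $P'$ are exactly those of $P$ with $\{S_b(c)\}$ deleted and $\{S_f(c)\}$ inserted, giving
$$ d(P') = d(P) - \#\{c \in \Delta(P) : S_b(c) \text{ is in an even row and column}\} + \#\{c \in \Delta(P) : S_f(c) \text{ is in an even row and column}\}. $$
Proposition \ref{proposition:cycles} asserts precisely that $S_b(c)$ lies in an even row and column if and only if $S_f(c)$ does, so the two correction terms agree cycle by cycle and cancel, yielding $d(P')=d(P)$.

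The main obstacle is the bookkeeping underlying the displayed identity: one must confirm that the squares $S_b(c)$ are exactly those of $\delta_{r+1}\setminus\delta_r$, that the squares $S_f(c)$ are genuinely new and mutually distinct, and that no final square of one cycle coincides with a beginning square of another, so that the count of affected squares lying in an even row and column is clean. These facts should follow from the description of $\Delta(T)$ as the open cycles beginning at the squares adjacent to the core together with the shape-change description of open cycles, but they deserve to be verified carefully. Once this is secured, combining $d(P)=d(P')$ with the rank-independence of $(-1)^{2\,spin(P)}$ closes the inductive step and hence the lemma.
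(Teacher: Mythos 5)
Your proposal is correct and takes essentially the same route as the paper's own proof: both reduce the statement to showing that $d$ and $spin$ are unchanged under $t_{r,r+1} = MT(\,\cdot\,,\Delta(\,\cdot\,))$, using Theorem \ref{theorem:colortospin} for the spin factor and Proposition \ref{proposition:cycles} to cancel, cycle by cycle, the loss of beginning squares against the gain of final squares in the count $d$, before invoking Lemma \ref{lemma:larger} for large $r$. The bookkeeping you flag as a remaining obstacle is handled in the paper at the same level of detail you give, namely by asserting that the shape difference between $P$ and $P'$ consists exactly of the beginning and final squares of the open cycles in $\Delta(P)$.
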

\begin{proof}
Let $P'=t_{r,r+1}(P)$.  We verify that $d(P)=d(P')$ and $spin(P)=spin(P')$, showing that the right hand side of the claimed equation is independent of $r$.  Since the theorem holds for large $r$ by Lemma \ref{lemma:larger}, the result will follow.

First, note that $spin(P)=spin(P')$ since both equal $tc(i)$ by Theorem \ref{theorem:colortospin}.  Since $P'=MT(P,\Delta(P))$ by Theorem \ref{theorem:relation}, the difference between the shapes of the two tableaux are the beginning and final squares for the open cycles in $\Delta(P)$.  In this process, $d(P)$ is reduced by one for each cycle whose beginning square lies in an even row and an even column.  It increases by one for each cycle whose final square lies in an even row and an even column.  But by Proposition \ref{proposition:cycles}, cycles in $\Delta(P)$ whose final square lies in an even row and column are precisely those whose beginning square lies in an even row and an even column.  Thus $d(P)=d(P')$.
\end{proof}

\subsection{Extension to $H_n$}

In this section we complete the proof of Theorem \ref{theorem:main}.  Each of the Robinson-Schensted algorithms $G_r$ suggests a equivalence relation on $H_n$, with two colored permutations equivalent if and only if they share same left tableau in the image of $G_r$.  While this family of relations has significance in representation theory and the Kazhdan-Lusztig theory of cells, see \cite{garfinkle1} and \cite{bgil}, we have an opportunity to use it toward our more modest purpose.
In \cite{taskin:plactic}, M.~Ta\c{s}k{\i}n described a set of generators for each of the above equivalence relations.  To prove Theorem \ref{theorem:main}, we track the action of each generator on left tableaux as well as the sign of the corresponding colored permutation.

We reproduce the definitions of five operators on $H_n$ originally appearing in \cite{taskin:plactic}.  The first is derived from the original Knuth relations of \cite{knuth:pacific}. Precursors to the next two appear in \cite{bonnafe:iancu} and \cite{garfinkle2} as plactic relations for $G_\infty$ and $G_0$.  The final two are designed to deal with two specific situations appearing among domino tableaux, especially of higher rank.  Write $w = w_1 w_2 \ldots w_n$ for a colored permutation and adopt the convention that $\overline{\overline{z}}=z$.

\vspace{.1in}
\begin{quote}

\noindent
1. If $w_i < w_{i+2} <w_{i+1}$ or $w_i <w_{i-1}<w_{i+1}$ for some $i < n$, then

\vspace{.1in}

\begin{center}
\framebox{$D_1^r(w)=  w_1\ldots w_{i-1}(w_{i+1} w_{i}) w_{i+2} \ldots w_n.$}
\end{center}

\vspace{.1in}

\noindent
2. If $r>0$ and if there exists $0<i\leq r$ such that $w_i$ and $w_{i+1}$ have opposite signs, then

\vspace{.1in}

\begin{center}
\framebox{$D_2^r(w)= w_1\ldots w_{i-1}(w_{i+1} w_{i}) w_{i+2} \ldots w_n.$}
\end{center}

\vspace{.1in}

\noindent
3. Suppose that $|w_1| > |w_i|$ for all $1 < i \leq r + 2$ and $w_2 \ldots w_{r+2}$ is obtained by concatenating some positive decreasing sequence to the end of some negative increasing
sequence (or vice versa), where at least one of the sequences is nonempty. Then

\vspace{.1in}

\begin{center}
\framebox{$D_3^r(w)= \overline{w_1} w_2 \ldots w_n.$}
\end{center}

\vspace{.1in}

\noindent
4. Let $k \geq 1$ such that $t = (k + 1)(r + k + 1) \leq  m$ and suppose
$$w = \mathfrak{C}_1 \ldots \mathfrak{C}_k \mathfrak{C}_{k+1} \, z \, w_{t+1} \ldots w_n$$
where each $\mathfrak{C}_i$ is a sequence of the form
$\mathfrak{C}_i = a_{i,i+r}\ldots a_{i,1}b_{i,i}\ldots b_{i,1}$ for $1 \leq i \leq k$ and $\mathfrak{C}_{k+1} = a_{k+1,k+r} \ldots a_{k+1,1}.$
Further suppose that the integers $a_{i,j}$ and $b_{i,j}$, whenever they appear among $w_1\ldots w_{t-1} = \mathfrak{C}_1 \ldots  \mathfrak{C}_{k+1}$ satisfy the following conditions:
\begin{enumerate}[\phantom{WWW}]
    \item $a_{i,j} > 0$ and $b_{i,j} < 0$ (or vice versa)
    \item $|a_{i,j−1} | < |a_{i,j} | < |a_{i+1,j} |$ and $|b_{i,j−1} | < |b_{i,j} | < |b_{i+1,j} |$
    \item $|b_{i,i} | < |a_{i+1,r+i+1}| < |b_{i+1,i+1}|$ for all $i = 1,\ldots,k-1.$
\end{enumerate}
Let $n = \max\{|w_1|, \ldots, |w_{t-1}|\}$ and suppose that $w_t = z$ satisfies one of the following:
\begin{enumerate}
    \item $|b_{k,k} | = n$ and $z$ is an integer between $a_{k+1,1}$ and $b_{k,1}$
    \item $|a_{k+1,r+k} | = n$ and $z$ is an integer between $a_{k,1}$ and $b_{k,1}$
    \item $|a_{k+1,r+k} | = n$, $z$ is an integer between $a_{k,1}$ and $a_{k+1,1}$ and $|a_{k+1,i} | < |a_{k,i+1}|$ for some $1 < i \leq k-1.$
\end{enumerate}

\noindent
Then set  $\overline{\mathfrak{C}_k}  = \overline{b_{k,k}} a_{k,k+r} \ldots  a_{k,1} b_{k,k-1} \ldots b_{k,1}$ and define

\vspace{.1in}

\begin{center}
\framebox{$D_4^r= \mathfrak{C}_1 \ldots \mathfrak{C}_{k-1} \overline{\mathfrak{C}_k} \mathfrak{C}_{k+1}\, z \, w_{t+1}\ldots w_n$.}
\end{center}

\vspace{.1in}

\noindent
5. Let $k \geq 1$ such that $t = (k + 1)(r + k + 2) \leq  m$ and suppose
$$w = \mathfrak{C}_1 \ldots \mathfrak{C}_k \mathfrak{C}_{k+1} \, z \, w_{t+1} \ldots w_n$$
where each $\mathfrak{C}_i$ is a sequence of the form
$\mathfrak{C}_i = a_{i,i+r}\ldots a_{i,1}b_{i,i}\ldots b_{i,1}$ for $1 \leq i \leq k$ and $\mathfrak{C}_{k+1} = a_{k+1,k+r+1} \ldots a_{k+1,1} b_{k+1,k} \ldots b_{k+1,1}.$ Further suppose that the integers $a_{i,j}$ and $b_{i,j}$, whenever they appear among  $w_1\ldots w_{t-1} = \mathfrak{C}_1 \ldots  \mathfrak{C}_{k+1}$, satisfy the following conditions:
\begin{enumerate}[\phantom{WWW}]
    \item $a_{i,j} > 0$ and $b_{i,j} < 0$ (or vice versa)
    \item $|a_{i,j-1} | < |a_{i,j} | < |a_{i+1,j} |$ and $|b_{i,j-1} | < |b_{i,j} | < |b_{i+1,j} |$
    \item $|a_{i,r+i} | < |b_{i,i}| < |a_{i+1,r+i+1}|$ for all $i = 1,\ldots,k.$
\end{enumerate}
Let $n = \max\{|w_1|, \ldots, |w_{t-1}|\}$ and suppose that $w_t = z$ satisfies one of the following:
\begin{enumerate}
    \item $|a_{k+1,r+k+1} | = n$ and $z$ is an integer between $a_{k+1,1}$ and $b_{k+1,1}$
    \item $|b_{k+1,k} | = n$ and $z$ is an integer between $a_{k+1,1}$ and $b_{k,1}$
    \item $|b_{k+1,k} | = n$, $z$ is an integer between $b_{k,1}$ and $b_{k+1,1}$ and $|b_{k+1,i} | < |b_{k,i+1}|$ for some $1 < i \leq k-1.$
\end{enumerate}
Then we set $\overline{\mathfrak{C}_k} \overline{\mathfrak{C}_{k+1}} = a_{k,k+r} \ldots a_{k,1} \overline{a_{k+1,k+1+r}} b_{k,k} \ldots b_{k,1} a_{k+1,k+r} \\ \ldots a_{k+1,1} b_{k+1,k} \ldots b_{k+1,1}$ and define

\vspace{.1in}

\begin{center}
\framebox{$D_5^r = \mathfrak{C}_1 \ldots \mathfrak{C}_{k-1} \overline{\mathfrak{C}_k} \overline{\mathfrak{C}_{k+1}} z w_{t+1}\ldots w_n.$}
\end{center}
\end{quote}

\noindent
As promised, these generate the equivalence relations described above.

\begin{theorem}[\cite{taskin:plactic}]
Two colored permutations $w$ and $v$ have the same left tableau in the image of the map $G_r$ if and only if one can be obtained from the other via a sequence of operators $D^r_i$ for $i=1,2, \ldots, 5$
\end{theorem}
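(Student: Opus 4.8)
The statement has the shape of a classical plactic theorem, so my plan is to mirror the structure of the proof that Knuth-equivalent words share their insertion tableau, now adapted to domino insertion over a non-trivial $2$-core. There are two directions. The forward direction asserts that each generator $D_i^r$ preserves the left tableau of $G_r$; the converse asserts that these generators already connect any two colored permutations with a common left tableau. As in the type $A$ story, I expect the forward direction to be a lengthy but essentially local verification, while the converse carries the real content.

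For the forward direction I would check the five operators one at a time against the domino insertion algorithm, exploiting that domino bumping is local. The operator $D_1^r$ is the Knuth relation, and on letters of a single sign it reduces to ordinary row (respectively column) insertion, so its invariance follows from the classical argument applied separately to the horizontal-domino and vertical-domino bumping chains; one must additionally check that transposing the relevant triple does not alter the step at which a horizontal and a vertical chain first interact. The operator $D_2^r$ transposes two adjacent letters of opposite sign in positions at most $r$; because such letters enter near opposite edges of a sufficiently long core, I expect their insertion paths to remain disjoint, so that swapping them cannot change $P$. For $D_3^r$, $D_4^r$, $D_5^r$ the task is to show that the prescribed sign changes and reorderings of the structured blocks $\mathfrak{C}_i$ reproduce the identical sequence of domino supports; here I would track $supp\, D(k,T)$ through the insertion and argue that the hypotheses on the $a_{i,j}, b_{i,j}$ force the affected dominoes to move as a coordinated block of cycles, so that the net effect on $P$ is trivial.

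For the converse I would follow the canonical-representative method. Fix a left tableau $P$ and define its reading word $\rho(P)$ by reading the labels of $P$ in a prescribed order, recording the color of each domino from its orientation. The crux is a reduction lemma: every colored permutation $w$ is $D$-equivalent to $\rho(P(w))$. I would prove this by induction on $n$, locating the domino carrying the largest label, using $D_1^r$ and $D_2^r$ to migrate it to its canonical terminal position, and invoking $D_3^r$, $D_4^r$, $D_5^r$ precisely in the configurations where moving this extreme letter past an already-structured block forces a sign change — exactly the patterns encoded in the hypotheses of those operators. Once the reduction lemma holds the theorem is immediate: if $P(w) = P(v)$ then $w \sim_D \rho(P) \sim_D v$, and the forward direction gives the reverse containment.

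The main obstacle is the reduction lemma, and within it the bookkeeping around $D_4^r$ and $D_5^r$. Their hypotheses describe intricate nested families of opposite-sign sequences whose role only becomes visible for cores of positive rank, and the real challenge is to show that these are the \emph{only} obstructions to reaching canonical form — that no further generator is ever required — and that the induction terminates. I would control this with a monovariant, for instance a lexicographic statistic measuring how far $w$ sits from $\rho(P(w))$ in terms of misplaced extreme entries, and show that each admissible operator strictly decreases it so the reduction cannot stall. Confirming that $D_4^r$ and $D_5^r$ genuinely arise, and arise only, at the claimed configurations is where the delicate analysis of interacting horizontal and vertical bumping chains near the core must be carried out in full.
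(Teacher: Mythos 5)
This statement is not proved in the paper at all: it is quoted, with attribution, from \cite{taskin:plactic}, and the paper uses it purely as an imported tool in the proof of its main theorem. There is therefore no internal proof to compare your attempt against; the only meaningful comparison would be with M.~Ta\c{s}k{\i}n's original argument, which the paper does not reproduce or even sketch.

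Judged on its own terms, your proposal is a research plan rather than a proof, and the gap sits exactly where the content of the theorem lies. In the forward direction, the invariance of the insertion tableau under $D_3^r$, $D_4^r$, $D_5^r$ is asserted (``argue that the hypotheses \dots force the affected dominoes to move as a coordinated block'') but never carried out; this is a delicate case analysis of interacting horizontal and vertical bumping chains near a staircase core, and nothing in the sketch shows it goes through --- even your $D_1^r$ claim needs care, since Knuth-type invariance for domino insertion is itself a nontrivial theorem (cf.\ \cite{pietraho:knuth}), not a formal consequence of the classical argument applied to each sign separately. More seriously, in the converse direction your reduction lemma --- every $w$ is $D$-equivalent to the reading word of its left tableau --- \emph{is} the completeness statement in disguise, and you defer its two essential ingredients (that $D_4^r$ and $D_5^r$ arise at exactly the claimed configurations and at no others, and that your proposed monovariant strictly decreases under some admissible operator) to future ``delicate analysis.'' Without these, the induction can stall: a priori there could exist a colored permutation whose extreme entry cannot be moved by any $D_i^r$ yet which is not in canonical form, which is precisely the scenario in which the generating set would be incomplete. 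Ruling out such configurations is the hard part of the theorem; your proposal names the obstacle accurately but does not overcome it.
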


We proceed with a case by case examination of the action of each of the above generators on right tableaux and their effect on $sign$.  Consider a signed permutation $w \in H_n$ and let $G_r(w) = (P,Q)$ be its image among same-shape standard domino tableaux of rank $r$. Define
$$F(w) = (-1)^{d} \cdot (-1)^{spin(P)+spin(Q)} \cdot sign(P) \cdot sign(Q).$$

\subsubsection{} We first examine the operators $D_1^r$.  Note that $D_1^r(w) = s_i w$ for some $i$.  Thus in all cases $sign(D_1^r(w)) = - sign(w)$.  We verify that $F(D_1^r(w)) = - F(w)$.  Let $Q'$ be the right tableau of $D_1^r(w)$. There are two possibilities:
    \begin{quote}
    1. Suppose that the action of $D_1^r$ on the right tableau of $w$ exchanges a block of dominos
    $$
        \begin{tiny}
        \begin{tableau}
        :>k\\
        :^{l}^{m}\\
        :;\\
        \end{tableau}
        \end{tiny}
        \hspace{.1in}
        \raisebox{.1in}{\text{ with }}
        \hspace{.1in}
        \begin{tiny}
        \begin{tableau}
        :^k^{l}\\
        :;\\
        :>{m}\\
        \end{tableau}
        \end{tiny}
        \raisebox{.1in}{\text{ ,}}
        \hspace{.3in}
        \raisebox{.1in}{\text{ or }}
        \hspace{.3in}
        \begin{tiny}
        \begin{tableau}
        :^k>l\\
        :;>m\\
        :;\\
        \end{tableau}
        \end{tiny}
        \hspace{.1in}
        \raisebox{.1in}{\text{ with }}
        \hspace{.1in}
        \begin{tiny}
        \begin{tableau}
        :>{k}^m\\
        :>{l}\\
        :;\\
        \end{tableau}
        \end{tiny}
            \raisebox{.1in}{\text{ ,}}
    $$
    while keeping the rest of the tableau fixed.  For the sake of typesetting we are writing $l=k+1$ and $m=k+2$.  Then this operation preserves the $d$ statistic as well as $spin$.  We examine $sign(Q) = (-1)^{inv(Q)}$.  Recall that within domino tableaux, inversions are defined in terms of marked squares.  First assume that in fact $D_1^r$ changes
    $$
    \raisebox{.15in}{$S=$ }
        \begin{tiny}
        \begin{tableau}
        :^k>l\\
        :;>m\\
        :;\\
        \end{tableau}
        \end{tiny}
        \hspace{.1in}
        \raisebox{.15in}{\text{ into }}
        \hspace{.1in}
        \raisebox{.15in}{$S'=$ }
        \begin{tiny}
        \begin{tableau}
        :>{k}^m\\
        :>{l}\\
        :;\\
        \end{tableau}
        \end{tiny}
    $$
    and that the top left-most box of $k$ is marked.   Then $(k+2,k+1) \in Inv(Q')$ but not in $Inv(Q)$. The only other changes in $Inv(Q)$ occur when $p$ is a marked square lying in the same rows as $S$ in $Q$  but outside of $S$.  Then an inversion of the form $(x,p)$ or $(p,x)$ is exchanged for an inversion of the form $(y,p)$ or $(p,y)$ where $x,y \in \{k,k+1,k+2\}$.  Consequently,  $sign(Q) = - sign(Q')$.
    The other possibilities are similarly routine.

    \vspace{.1in}
    \noindent
    2. If the action of $D_1^r$ is not by exchange of one of the above configurations, then by \cite[2.1.19]{garfinkle2} and \cite[Prop 4.6]{pietraho:knuth}, $Q$ and $Q'$ differ by an exchange of labels of two consecutive dominos.  It is clear that this changes $inv(Q)$ by one.  The other statistics are constant.
    \end{quote}
In either case, $F(D_1^r(w)) = - F(w)$, as desired.

\subsubsection{} The case of the operator $D_2^r$ is very similar. Again, we have $D_2^r(w) = s_i w$ for some $i$ and consequently $sign(D_2^r(w)) = - sign(w)$.  The description of the action of this operator on $Q$ is implicit in the proof of \cite[Theorem 3.1]{taskin:plactic}.  It exchanges two consecutive dominos. As above, this changes $inv(Q)$ by one, holds the other statistics constant, and again, $F(D_2^r(w)) = - F(w)$, as desired.

\subsubsection{} In the case of $D_3^r$, we have $D_3^r(w) = t w$. Consequently $sign(D_3^r(w)) = - sign(w)$.  The action of $D_3^r$ on right tableaux is more intricate.  Our description is based on \cite[Cor 4.4 et seq.]{pietraho:knuth}.  Within $Q$, the operator exchanges the subtableaux

$$
\raisebox{.35in}{$S=$ \hspace{.2in}}
\begin{tiny}
\begin{tableau}
:.{}.{}.{}.{}.{}.{} \cdots >{1}\\
:.{}.{}.{}.{}.{}.{}\\
:.{}.{}.{}.{}.{}>{\alpha_3}\\
:.{}.{}.{}.{}>{\alpha_2}\\
:.{}.{}.{}^{\beta_1}>{\alpha_1}\\
:.{}.{}^{\beta_2}\\
:\vdots\\
:;\\
:^{\beta_p}\\
\end{tableau}
\end{tiny}
\hspace{.2in}
\raisebox{.35in}{
\text{ and }}
\hspace{.3in}
\raisebox{.35in}{$S'=$ \hspace{.2in}}
\begin{tiny}
\begin{tableau}
:.{}.{}.{}.{}.{}.{} \cdots >{\alpha_q}\\
:.{}.{}.{}.{}.{}.{} \\
:.{}.{}.{}.{}.{}>{\alpha_2}\\
:.{}.{}.{}.{}^{\,\beta_1}^{\alpha_1}\\
:.{}.{}.{}^{\,\beta_2}\\
:.{}.{}^{\beta_3}\\
:\vdots\\
:;\\
:^{1}\\
\end{tableau}
\end{tiny}
$$

\vspace{.2in}

\noindent
where the labels in $S$ and $S'$ coincide with $\mathbb{N}_{r+2},$ $\alpha_1= r+2$, and $p$ may equal zero.  The rest of $Q$ is fixed.  Note that the $d$ statistic is fixed by this operation and $spin(Q)$ changes by one.  We examine $sign(Q)$.  First assume that $r$ is odd and consequently the squares adjacent to the core are not marked squares.  We have to consider the effect this transformation has on the set of inversions in $Q$.  We consider two subsets, the inversions $Inv(S)$ among entries in $S$,  and those occurring between entries in $S$ and the rest of $Q$.  The order of the latter set is fixed by $D_3^r$ as labels in $S$ and $S'$ are just $\mathbb{N}_{r+2}$.  We compare the size of the former set in $S$ and $S'$.  By inspection,
$$Inv(S') \setminus Inv(S) =  \{(\alpha_i, 1)\}_{\alpha_i \neq 1} \cup \{(\beta_i, 1)\}_i.$$

\noindent
Thus $inv(S')= inv(S)+(r+1)$.  Since $r$ is odd, this means $sign(Q)=sign(Q')$.
Similar analysis applies in the case $r$ is even.  In either case, we have the equality $F(D_3^r(w)) = - F(w)$, as desired.

\subsubsection{}  At first glance, the operators $D_4^r$ and $D_5^r$ seem much more daunting than the prior three, but at least on the level of tableaux, they are in some sense just more intricate versions of $D_3^r$.
We first note that by the construction of $D_4^r$, adopting notation from its definition,
$$sign(D_4^r(w)) = (-1)^{k+r+1} sign(w).$$
The effect of the operators $D_4^r$ and $D_5^r$ on right tableau is described in the proof of \cite[Theorem 3.1]{taskin:plactic}.
There are four cases in our analysis of $F(D_4^r(w))$ distinguished by the sign of the $a_{ij}$ and the parity of $r$, which influences the choice of marked squares.  First assuming that the $a_{ij}$ are negative, the right tableaux $Q$ of $w$ and $Q'$ of $D_4^r(w)$ differ within the subtableaux

$$\raisebox{.5in}{$S=$ \hspace{.2in}}
\begin{tiny}
\begin{tableau}
:;;;;;;;>{l_1}\\
:;;;;;;>{l_2}\\
:\\
:;;;;;\cdot\\
:;;;;\cdot\\
:;;;\cdot\\
:;\\
:;>{l_{k-1}}\\
:>{l_k}\\
:>{t}\\
\end{tableau}
\end{tiny}
\hspace{.4in}
\raisebox{.5in}{and \hspace{.2in} $S'=$ \hspace{.2in}}
\hspace{.4in}
\begin{tiny}
\begin{tableau}
:;;;;;;;>{l_2}\\
:;;;;;;>{l_3}\\
:\\
:;;;;;\cdot\\
:;;;;\cdot\\
:;;;\cdot\\
:;\\
:;>{l_k}\\
:^{l_1}^{t}\\
:;\\
\end{tableau}
\end{tiny}
$$

\vspace{.1in}
\noindent
corresponding to the dominos inserted from the subword $\mathfrak{C}_k \mathfrak{C}_{k+1} \, z$.
Here we define $l=k+r+\sum_{i=1}^{k-1} 2i+r$, $t=(k+1)(r+k+1)$, and adopt the shorthand $l_j = l +j$.  Note that these tableaux are independent of the cases (1)-(3) in the definition of $D_4^r$.  The same subtableau results in all three.

It is clear that $d(Q) = d(Q')$ as this statistic only depends on the underlying tableau shape.  Further, $spin(Q)$ and $spin(Q')$ differ by one.  We analyze $inv(Q)$.  If $A$ is a subtableau of $B$, we will write $B \setminus A$ for the boxes in $B$ not included in $A$, and $B(t)$ for the subtableau of $B$ consisting of dominos with labels less than or equal to $t$.  Note that
\begin{align*}
inv(Q) & = inv(S) + inv(Q(t)\setminus S) + inv(Q(t) \setminus S , S)\\
 & + inv(Q\setminus Q(t)) + inv(Q\setminus Q(t), Q(t)).
\end{align*}
The only values in this decomposition that can potentially change in the transformation from $Q$ to $Q'$ are $inv(S)$ and $inv(Q(t) \setminus S , S)$.

 When $r$ is odd, then the top rightmost squares of $S$ and $S'$ are unmarked and
$inv(S')-inv(S) = k-1$, while $inv(Q'(t) \setminus S' , S') - inv(Q(t) \setminus S , S) = 0$.
When $r$ is even, then  then the top rightmost squares of $S$ and $S'$ are marked and
 $inv(S')-inv(S) = k$, while again
 $inv(Q'(t) \setminus S' , S') - inv(Q(t) \setminus S , S) = 0$.

When the $a_{ij}$ are positive, the tableaux $Q$ and $Q'$ differ in subtableaux that are transposes of $S$ and $S'$.  While the analysis is a little different, the above results are exactly the same: $inv(Q')-inv(Q)= k-1$ when $r$ is odd and $k$ when $r$ is even.  Hence in all the cases, $inv(Q') - inv(Q) \equiv k+r \mod{2}$ and $sign(Q') = (-1)^{k+r} sign(Q).$ Consequently,
    $$F(D_4^r(w)) = (-1)^{k+r+1}F(w).$$
and we find that $F(w)$ transforms in step with $sign(w)$, as desired.

\subsubsection{$D_5^r$}  Analysis of this operator follows a similar outline as that of $D_4^r$.  We first note that by its construction, again adopting notation from the definition of $D_5^r$,
$$sign(D_5^r(w)) = (-1)^{k+1} sign(w).$$
Again there are four cases in our analysis of $F(D_5^r(w))$.  First assuming that the $a_{ij}$ are positive, the right tableaux $Q$ of $w$ and $Q'$ of $D_5^r(w)$ differ within the subtableaux
$$\raisebox{.5in}{$S=$ \hspace{.2in}}
\begin{tiny}
\begin{tableau}
:;;;;;;;>{l_1}\\
:;;;;;;>{l_2}\\
:\\
:;;;;;\cdot\\
:;;;;\cdot\\
:;;;\cdot\\
:;\\
:;>{l'_0}\\
:>{l'_1}\\
:>{t}\\
\end{tableau}
\end{tiny}
\hspace{.4in}
\raisebox{.5in}{and \hspace{.2in} $S'=$ \hspace{.2in}}
\hspace{.4in}
\begin{tiny}
\begin{tableau}
:;;;;;;;>{l_2}\\
:;;;;;;>{l_3}\\
:\\
:;;;;;\cdot\\
:;;;;\cdot\\
:;;;\cdot\\
:;\\
:;>{l'_1}\\
:^{l_1}^{t}\\
:;\\
\end{tableau}
\end{tiny}
$$

\vspace{.1in}
\noindent
corresponding to the dominos inserted from the subword $\mathfrak{C}_k \mathfrak{C}_{k+1} \, z$.
Here we define $l=\sum_{i=1}^k (2i+r)$, $t = (k+1)(r+k+2)$, and for integers $j$, $l_j = l +j$, and $l'_j=l+k+r+j$.  Again, $d(Q)=d(Q')$ and $spin(Q)$ differs from $spin(Q')$ by $1$.  We analyse $inv(Q)$.

 When $r$ is odd, then the top rightmost squares of $S$ and $S'$ are marked and
 $inv(S')-inv(S) = k+r+1$, and
$inv(Q'(t) \setminus S' , S') - inv(Q(t) \setminus S , S) = 0$.
When $r$ is even, then  then the top rightmost squares of $S$ and $S'$ are unmarked and
 $inv(S')-inv(S) = k+r$, and again
$inv(Q'(t) \setminus S' , S') - inv(Q(t) \setminus S , S) = 0$.

When the $a_{ij}$ are negative, the tableaux $Q$ and $Q'$ differ in subtableaux that are transposes of $S$ and $S'$.  We again have $inv(Q')-inv(Q)= k+r+1$ when $r$ is odd and $k+r$ when $r$ is even with the other inversion statistics unchanged.  Hence in all the cases, $inv(Q') - inv(Q) \equiv k \mod{2}$ and $sign(Q') = (-1)^k sign(Q).$  Consequently,
$$F(D_5^r(w)) = (-1)^{k+1} F(w).$$
The proof of Theorem \ref{theorem:main} is complete.
%Remark on other proofs: all of them require tacking tableaux statistics under moving through cycles, which the present approach was designed to avoid as much as possible.

\end{document}